\theoremstyle{plain}
\newtheorem{theorem}{Theorem}
\newtheorem{proposition}{Proposition}
\newtheorem{lemma}{Lemma}
\newtheorem{remark}{Remark}
\theoremstyle{definition}
\newtheorem{definition}{Definition}
\DeclareMathOperator{\Cov}{Cov}
\newcommand{\wt}{\mathsf{wt}}
\newcommand{\supp}{\mathsf{supp}}
\let\tilde\widetilde
\title{Operator limit of Wigner matrices I}
\author{Debapratim Banerjee}
\email{debapratim.banerjee@ashoka.edu.in}
\address{
    Department of Mathematics \\
    Ashoka University\\
    Plot no 2, Rajiv Gandhi Education City, Sonipat 131029\\
    Haryana, India.
}
\begin{document}

\maketitle
\begin{abstract}
    We consider the Wigner matrix $W_{n}$ of dimension $n \times n$ as $n \to \infty$. The objective of this paper is two folds: first we construct an operator $\mathcal{W}$ on a suitable Hilbert space $\mathcal{H}$ and then define a suitable notion of convergence such that the matrices $W_{n}$ converge in that notion of convergence to $\mathcal{W}$. We further investigate some properties of $\mathcal{W}$ and $\mathcal{H}$. We show that $\mathcal{H}$ is a nontrivial extension of $L^{2}[0,1]$ with respect to the Lebesgue measure and the spectral measure of $\mathcal{W}$ at any function $f \in L^{2}[0,1]$ is almost surely the semicircular law.
\end{abstract}
\section{Introduction}
The study of random matrices began with the seminal papers of Wigner \cite{wig1} and \cite{wig2}. He considered symmetric matrices of growing dimension where the entries in the upper diagonal part are i.i.d. mean zero and variance one. In these papers, he determined the limit of histograms of the eigenvalues of such matrices under proper scaling. This limit is famously known as the semicircular law. As time passed, several finer questions came into consideration and were solved. In modern times, we have seen remarkable progress in the study of random matrices, in general. For example, the successful resolution of the Dyson-Mehta conjectures can be seen as a cornerstone of these achievements. These show many strong results on the eigenvalues of such matrices. One might look at \cite{mehta2004random,erdHos2010bulk, MR2661171, MR2851058, erdHos2012local,erdHos2012bulk, erdHos2012rigidity}. The breadth and depth of the current results are so great that we shall not be able to discuss them in this introduction. However, to the best of our knowledge, there is no paper that attempts to define a limiting operator of such matrices. As Wigner matrices were invented to study the energy of the nuclei of heavy atoms, defining a limiting operator might be useful from the physics point of view. The series of works which attempted to define an operator limit of random matrices is by Virag and co-authors. One might look at \cite{ramirez2011beta} and the follow-up papers. These papers consider GOE/GUE matrices and define the operator limits of  tridiagonalized versions of these matrix. In particular, following \cite{ramirez2011beta} section 2, these operators are called stochastic Airy operators. The operator $\mathcal{H}_{\beta}$ ($\beta =1$ for GOE, $\beta =2$ for GUE) is a random linear map from $H^{1}_{\mathrm{Loc}} \to D$ and is given by 
\[
\mathcal{H}_{\beta}(f)= -f''+ xf + \frac{2}{\sqrt{\beta}} b'.
\]
Here $D$ is the space of distributions,$H^{1}_{\mathrm{Loc}}$ is the space of all measurable functions $f: \mathbb{R}^{+} \to \mathbb{R}$  for which $f'\mathbb{I}_{I} \in L^{2}$ for any compact set $I$ and $b$ is the standard Brownian motion. Any symmetric matrix can be unitarily tridiagonalized. Hence, tridiagonalization although preserves the eigenvalues, the information about eigenvectors are lost.

In this article, we construct a limiting operator of the Wigner matrices without tridiagonalizing it. The major challenge remains in construction of the space on which the limiting operator is defined. Recalling the proof of the spectral theorem of self-adjoint bounded operators on Hilbert spaces (\cite{bhatia2009notes} for example), a major assumption is, the range and domain of the operator should be the same. This assumption is really crucial for eigenvalues and can be seen on finite dimensional Hilbert spaces as well. For example we consider the matrix $X=\left[\begin{array}{cc}
   0  & 1 \\
    1 & 0
\end{array}\right].
$
This matrix has eigenvalues $\pm 1$ when it is defined from $\mathbb{R}^2 \to \mathbb{R}^2$. However, if we define the operator on the one dimensional subspace $\left( \begin{array}{c}
     c \\
     0 
\end{array} \right)$, then  $\left[\begin{array}{cc}
   0  & 1 \\
    1 & 0
\end{array}\right] \left( \begin{array}{c}
     c \\
     0 
\end{array} \right)= \left(\begin{array}{c}
     0  \\
     c 
\end{array} \right)$ implying no eigenvalues. We face this kind of issues while defining the limiting operators. First of all, we consider any of dimension $n \times n$ as a kernel operator $K_{n}$ on $L^{2}\{\frac{1}{n},\ldots, \frac{n}{n}  \}$ with respect to the measure $\mu_{n}= \frac{1}{n}\left( \sum_{i=1}^{n} \delta_{\frac{1}{n}} \right)$ so that our limiting space is an approximation of the function space $L^2[0,1]$. However the major difficulty remains in determining the limit of $K_{n}(f)$ for $L^{2}$ functions. For example, even for the constant function $f=1$, we find the limit of $K_{n}(f)$ is not a Lebesgue measurable function. Hence, if we apply the operator to any measurable function, the image goes out of the space of measurable functions. In section \ref{sec:nonmeasure} we build enough machinery to deal with such objects.  This creates a problem defining the limiting operator. To overcome this, we consider $K_{n}^{l}(\mathbb{I}_{P_{i}})$ for every $l$ and intervals $P_{i}$ with rational endpoints so that image of the operator doesn't go out side the space. Actually, the space we work with turns out to be a Hilbert space which is a non-trivial extension of the $L^{2}[0,1]$. Further, the limit of $K_{n}(f)$ for any measurable function $f$ is orthogonal to any measurable function.  A majority of the paper is devoted to formalize such vector space. 

The definition of the limiting space, the limiting operator and the operator convergence is given in Definitions \ref{def:limspace} \ref{def:limop} and \ref{def:opconv} respectively. Before moving forward, we give some connection to the existing notions of operator convergence. In particular, we compare it with strong and weak operator convergence. In both notions of convergence, the space on which the operators are defined does not change. On the other hand, in our case, the space changes with $n$. In particular, for every $n$, we have a kernel acting on $L^{2}\{\frac{1}{n},\ldots, \frac{n}{n}\}$. We prove that for any interval $P_{i}=(a,b]$, $K_{n}^{l}(\mathbb{I}_{P_{i}})=\beta_{l}\mathbb{I}_{P_{i}}+ \psi_{n,l,i}+ \varepsilon_{n,l,i}$ such that $\varepsilon_{n,l,i}$ is a vector of norm close to zero and $ \psi_{n,l,i} \to \psi_{l,i}$ in some notion such that $\langle \psi_{n,l_{1},i_{1}},\psi_{n,l_{2},i_{2}} \rangle \to \langle \psi_{l_{1},i_{1}}, \psi_{l_{2},i_{2}} \rangle $. The limiting operator is defined in such a way that  $\langle v_{n,1}, K_{n}(v_{n,2}) \rangle \to \langle v_{1}, \mathcal{W}(v_{2}) \rangle$. Here $v_{1}$ and $v_{2}$ are the limits of $v_{n,1}$ and $v_{n,2}$ respectively. In this sense, this is closely related to weak operator convergence. 

Finally, regarding the spectral properties of the limiting operator, we mention that the spectral measure at any $L^{2}$ measurable function is the semicircular law. This is the coarsest resolution of eigenvalues that we can look at. We believe that there is a hierarchy of operators depending on the resolution of eigenvalues we want to examine. For example, we believe there is a different operator if we want to capture the edge universality. We leave this for future work.  
\section{Notations and organizations}.
Throughout this paper, we shall follow the following notations: 
\begin{enumerate}
    \item The Wigner matrices of dimension $n \times n$ will be denoted by $W_{n}$.
    \item $\beta_{l}$ will be used to denote the $l^{th}$ moment of the semicircular law. In particular, $\beta_{2l+1}=0$ and $\beta_{2l}= \frac{1}{l+1}\binom{2l}{l}$.
    \item $\mu_{n}$ will be used to define the measure $\frac{1}{n} \sum_{i=1}^{n} \delta_{\frac{i}{n}}$.
    \item The limit of the Wigner matrices will be denoted by $\mathcal{W}$ and the Hilbert space on which $\mathcal{W}$ is defined will be denoted by $\mathcal{H}$.
\end{enumerate}
The paper has the following four parts. In the first part, we consider the Wigner matrices as kernel and argue that if we want to define the limit of the operator, then the space must be a nontrivial extension of the $L^{2}[0,1]$ space. In the second part, we develop the theory for the vectors that we need to include to define the limiting operator, but are outside the space $L^{2}[0,1]$. In the third part, we define the notion of convergence and define the limiting Hilbert space $\mathcal{H}$ and $\mathcal{W}$ formally. In the final part, we prove that $W_{n}$ converges to $\mathcal{W}$ in that notion of convergence and the spectral measure of $\mathcal{W}$ at any $f\in L^{2}[0,1]$ is almost surely the semicircular law.

\section{Wigner matrices}\label{sec:wig}
 In this section, we introduce the matrix ensembles. Firstly, we start with the definition of Wigner matrices. 
 \begin{definition}\label{def:wig}
 We call a matrix $W_{n}= \left( x_{i,j}/ \sqrt{n} \right)_{1\le i,j \le n}$ to be a Wigner matrix if $x_{i,j}=\bar{x}_{j,i}$, $\left(x_{i,j}\right)_{1\le i<j\le n}$ are i.i.d., $\mathrm{E}[x_{i,j}]=0$ and $\mathrm{E}[|x_{i,j}|^2]=1$. 
 \end{definition}
 In this paper, we define the limit of this matrix. Special cases of Wigner matrix are known as the GOE/GUE matrix where the entries are real(GOE)/complex(GUE) Gaussian random variables.
 
As mentioned in the introduction, our main approach is to scale the set $\{ 1,\ldots ,n \}$ by $n$ and view this as an approximation of the interval $[0,1]$. To do this, we view the Wigner matrix as a kernel operator on the space $L^{2}\{ \frac{1}{n},\ldots, \frac{n}{n}  \}$ with respect to the measure $\mu_{n}= \frac{1}{n} \left( \sum_{i=1}^{n} \delta_{\frac{i}{n}} \right)$ such that the eigenvalues remain unchanged. 

\subsection{Wigner matrices as kernel operators on $L^{2}\{ \frac{1}{n},\ldots, \frac{n}{n}  \}$}\label{kernel}  \
\begin{definition}\label{def:kernel}
A kernel operator $\left(K\left(\frac{i}{n},\frac{j}{n}\right)\right)_{1\le i,j \le n}$ is a function on $\left\{  
\frac{1}{n},\ldots, \frac{n}{n} \right\}^2$ such that for any function $f$ on $L^{2}\left\{ \frac{1}{n},\ldots, \frac{n}{n}  \right\}$ with respect to $\mu_{n}$ we have
\[
K(f)\left(\frac{i}{n}\right)=  \frac{1}{n}\sum_{i=1}^{n} K\left(\frac{i}{n},\frac{j}{n}\right) f\left(\frac{j}{n}\right).
\]
\end{definition}
\begin{lemma}\label{lem:wigtokernel}
Let $W_{n}$ be the Wigner matrix as defined in Definition \ref{def:wig} and $K_{n}$ be the  kernel operator on $L^{2}\left\{ \frac{1}{n}, \ldots, \frac{n}{n} \right\}$ defined as  
\[
K_{n}\left( \frac{i}{n}, \frac{j}{n} \right)= \sqrt{n} x_{i,j}.
\]
Then $W_{n}$ and $K_{n}$ have the same eigenvalues.
\end{lemma}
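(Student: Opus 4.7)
The plan is to unwind both definitions and exhibit an explicit bijection between the eigenvalue problems for $W_n$ and $K_n$ via the natural identification of functions on $\{1/n,\ldots,n/n\}$ with column vectors in $\mathbb{C}^n$. Concretely, I would associate to any $f \in L^2\{1/n,\ldots,n/n\}$ the vector $v_f = (f(1/n),\ldots,f(n/n))^T$, and note that this map is a linear bijection. Under this identification, the claim to prove is that $W_n v_f = \lambda v_f$ if and only if $K_n(f) = \lambda f$, from which equality of spectra follows immediately.

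The calculation driving this is just one line in each direction. From Definition~\ref{def:kernel} applied to $K_n(i/n,j/n) = \sqrt{n}\,x_{i,j}$, we get
\[
K_n(f)\!\left(\tfrac{i}{n}\right) \;=\; \frac{1}{n}\sum_{j=1}^n \sqrt{n}\,x_{i,j}\,f\!\left(\tfrac{j}{n}\right) \;=\; \frac{1}{\sqrt{n}}\sum_{j=1}^n x_{i,j}\,f\!\left(\tfrac{j}{n}\right),
\]
which is exactly the $i$-th coordinate of $W_n v_f$ since $(W_n)_{i,j} = x_{i,j}/\sqrt{n}$. Thus $K_n(f)$ corresponds under the identification $f \leftrightarrow v_f$ to the vector $W_n v_f$.

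Having established this correspondence, the equivalence of eigenvalue equations is immediate: if $\lambda$ is an eigenvalue of $W_n$ with eigenvector $v \neq 0$, define $f$ on $\{1/n,\ldots,n/n\}$ by $f(i/n) = v_i$; then $K_n(f)(i/n) = (W_n v)_i = \lambda v_i = \lambda f(i/n)$, so $\lambda$ is an eigenvalue of $K_n$ with eigenfunction $f$, which is nonzero in $L^2\{1/n,\ldots,n/n\}$ because $\mu_n$ assigns positive mass to every point. The reverse direction is identical.

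There is no real obstacle here; the only subtle point worth flagging is the factor of $1/n$ from the measure $\mu_n$ combining with the factor $\sqrt{n}$ in the definition of the kernel to produce the correct normalization $1/\sqrt{n}$ matching $W_n$. This explains why the rescaling in Definition~\ref{def:wig} and the rescaling in the kernel are chosen the way they are: it is precisely the condition that the kernel operator on $L^2(\mu_n)$ and the matrix $W_n$ on $\mathbb{C}^n$ agree. The lemma therefore serves mainly as a bookkeeping device that legitimizes viewing $W_n$ through the lens of kernel operators on a space approximating $L^2[0,1]$.
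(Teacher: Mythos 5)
Your proof is correct and takes essentially the same route as the paper: identify functions on $\{1/n,\ldots,n/n\}$ with vectors in $\mathbb{C}^n$, observe the $1/n$ from $\mu_n$ and the $\sqrt{n}$ in the kernel combine to give $W_n$, and transfer the eigenvalue equation. The only cosmetic difference is that the paper introduces a normalizing constant $c=\sqrt n$ so the $L^2(\mu_n)$ norm of $f$ matches the Euclidean norm of the eigenvector, which is harmless but unnecessary for an eigenvalue statement; you rightly omit it.
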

\begin{proof}
Let $\lambda$ be an eigenvalue of $W_{n}$. Then we know that there is a vector $g=(g_{1},\ldots, g_{n})$ in $\mathbb{R}^{n}$ such that $||g||^2=1$ and $W_{n}g =\lambda g$. Now $|| g ||^2= \sum_{i=1}^{n}g_{i}^2$. We construct $f \in L^{2}\left\{ \frac{1}{n},\ldots, \frac{n}{n} \right\}$ with respect to $\mu_{n}$ such that $f(\frac{i}{n})=c g(i)$ with $c>0$ and $||f||^2=||g||^2$. We know that 
\[
||f||^{2}= \frac{1}{n}\sum_{i=1}^{n} f\left( \frac{i}{n} \right)^2= \frac{c^2}{n} \sum_{i=1}^{n} g(i)^2= \frac{c^2}{n}.
\]
Hence, $c= \sqrt{n}$. Now, 
\begin{equation}
\begin{split}
K_{n}(f)\left( \frac{i}{n} \right)& = \frac{1}{n} \sum_{j=1}^{n} K_{n}\left( \frac{i}{n}, \frac{j}{n} \right) f\left(\frac{j}{n}\right)\\
& = \frac{1}{n}\sum_{j=1}^{n} \sqrt{n} x_{i,j} \sqrt{n} g(j)\\
&= \lambda \sqrt{n} g(i)= \lambda f\left(\frac{i}{n}\right).
\end{split}
\end{equation}
The opposite can also be proved in the similar way. Hence $W_{n}$ and $K_{n}$ have the same eigenvalues.
\end{proof}
As a consequence, we shall apply Lemma \ref{lem:wigtokernel} and work with $K_{n}(\cdot,\cdot)$ instead of $W_{n}$.
\section{The extension of $L^{2}[0,1]$}\label{sec:nonmeasure}
In this paper, we see the space $\{\frac{1}{n},\ldots, \frac{n}{n}  \}$ as an approximation of the space $[0,1]$.  
Our main goal is to define  some kind of limit of the kernel operator $K_{n}(\cdot,\cdot)$ defined in Lemma \ref{lem:wigtokernel} as $n$ becomes larger and larger. We want to find a space such that the image under the action of the limiting operator (provided it exists) is contained in the space. However, in this section, we prove that the space $L^{2}[0,1]$ can not be candidate for this space. In particular, the action of $K_{n}$ on constant functions gives rise to vectors which in the limiting sense can't be measurable functions.

We first give an example regarding this. For the time being, we assume that $x_{i,j}$' s are i.i.d. standard Gaussians and we write $K_{n}= \frac{1}{\sqrt{2}}\left(K_{n,1}+ K_{n,2}\right)$ where $K_{n,1}\left(\frac{i}{n}, \frac{j}{n}\right)= \sqrt{n} y_{i,j}$ and $K_{n,2}\left(\frac{i}{n}, \frac{j}{n}\right)= K_{n,1}\left( \frac{j}{n}, \frac{i}{n} \right)$ with $y_{i,j}$'s are i.i.d. standard Gaussians. To avoid calculations hassles (which will be considered in the main part of the paper), we shall act $K_{n,1}$ on the constant vector $f_{n}=(1,\ldots,1)$ that has $L^2$ norm $1$ as an element in $L^2\{\frac{1}{n},\ldots, \frac{n}{n} \}$ with respect to $\mu_{n}$.

Applying $K_{n,1}$ on $f$, we get 
\begin{equation}
\begin{split}
K_{n,1}(f)\left( \frac{i}{n} \right)&= \frac{1}{n} \sum_{i=1}^{n} K_{n,1}\left(\frac{i}{n}, \frac{j}{n}\right)f\left( \frac{j}{n} \right)\\
&= \sum_{j=1}^{n}\frac{1}{\sqrt{n}} y_{i,j}\\
&= \eta(i)
\end{split}
\end{equation}
with $\eta(i)$'s are i.i.d. $N(0,1)$. Observe that 
\[
||K_{n,1}(f) ||^2= \frac{1}{n}\sum_{i=1}^{n} \eta(i)^2 \stackrel{a.s.}{\to} 1 
\]
as $n \to \infty$. So, approximately the limit of $K_{n,1}(f)$ is a unit vector. However, the main problem arises in the measurability of the limit of $K_{n,1}(f)$. In particular, the way we define the limit of $K_{n,1}(f)$ will be orthogonal to any step function (random or nonrandom). Hence, the limiting object is not a measurable function. We identify the limit of such vectors as $\frac{dB(x)}{\sqrt{dx}}$. This notion is formalized in the next subsection.
\subsection{Defining the limiting objects}\label{subsec:delta}
We have briefly mentioned that we need to deal with vectors where the limits are not measurable functions. In this section, we give a unified way to deal with these vectors. This treatment is applicable to general Dirac $\delta$ functions and white noises as well. This treatment is quite similar to classical treatment of delta functions and white noises but it includes objects like the limit of $K_{n,1}(f)$ as $n$ goes to infinity.

\noindent 
As a starting point, we start with the Dirac $\delta$ function at some point $x_{0}$, which we denote $\delta_{x_{0}}(x)$. We know that heuristically $\delta_{x_{0}}(x)$ can be thought of $\frac{dF_{x_{0}}(x)}{dx}$ where $F_{x_{0}}(x)$ is defined as follows:
\begin{equation}
F_{x_{0}}(x)= \left\{
\begin{array}{ll}
0 & \text{whenever $x < x_{0}$}\\
1 & \text{otherwise.}
\end{array}
\right.
\end{equation}
However, the main problem here is that the function $F_{x_{0}}(x)$ is not differentiable at the point $x_{0}$. On the other hand, from measure theory $F_{x_{0}}(x)$ can be viewed as the distribution function of the probability measure that gives unit mass to the point $x_{0}$. We call this measure $\mu_{x_{0}}$. In particular, we have 
\[
\mu_{x_{0}}((a,b])= F_{x_{0}}(b)- F_{x_{0}}(a).
\] 
Observe that $\mu_{x_{0}}$ is a function on sets rather than points.
On the other hand, for any differentiable function $F$, we know that 
\[
\frac{dF(x)}{dx}= \lim_{h \to 0} \frac{F(x+h)-F(x)}{h}.
\]
Combining these two concepts, we view the derivatives as follows:
\begin{definition}\label{def:derset}
Suppose $F$ is a function which is not necessarily differentiable. Then the derivative of $F$ is defined as a set function on non-trivial  half open intervals and is given by 
\[
\frac{dF}{dx}((a,b])= \frac{F(b)-F(a)}{b-a}
\]
for any $-\infty < a<b <\infty.$
\end{definition}
Observe that if $F$ is differentiable at a point $x$, this set function converges to $F'(x)$ as $a \to b$. So, this definition is consistent with the classical definition of derivatives. However, this allows us to give an interpretation of the white noise which is defined as the derivative of the standard Brownian motion. 

It is a well-known fact that if $B(x)$ is the standard Brownian motion, then $B(b)-B(a) \sim O(\sqrt{b-a})$ for $a<b$. In this paper, we shall show that the standard Brownian motion $B(x)$ is $\frac{1}{2}$ Holder continuous in the $L^{2}$ sense. Definition  \ref{def:derset} is flexible enough to define $\frac{dF}{(dx)^{\alpha}}$ for any $0< \alpha <1$. It is defined as follows:
\begin{definition}\label{def:dersetalpha}
Let $F$ be a function. Then, for any $0<\alpha <1$, we define:
\[
\frac{dF}{(dx)^{\alpha}}((a,b])= \frac{F(b)-F(a)}{(b-a)^{\alpha}}.
\]
\end{definition}
Hence, we can also define $\frac{dB(x)}{\sqrt{dx}}$. We shall later identify the limit of $K_{n,1}(f)$ as $\frac{dB(x)}{\sqrt{dx}}$. 
\begin{definition}
Using this definition, it is also straight forward to define the inner product of $F$ and $G$ where $F$ and $G$ are set functions on half open intervals in the following way:
\begin{equation}\label{def:inner}
\int_{0}^{1} F(x)G(x) dx= \lim_{||\mathcal{P}|| \to 0} \sum_{j=1}^{l} G((a_{j-1},a_{j}])F((a_{j-1},a_{j}]) (a_{j}-a_{j-1}) ~ \text{(provided it exists)}
\end{equation}
\end{definition}

\begin{remark}\label{rem:measurable}
    Given any measurable function $f \in L^{2}[0,1]$ with respect to the Lebesgue measure, the natural analogue of $f$ as a set function is given by 
    \[
    \tilde{f}((a,b])= \frac{\int_{a}^{b} f(x) dx}{(b-a)}.
    \]
    In Propositions \ref{prop:well}, we shall prove that $\tilde{f}$ is well defined, i.e., if two functions $f$ and $g$ give rise to the same set functions, then $f=g$ (a.s. w.r.t. Lebesgue measure). Furthermore, in Proposition \ref{prop:inner}, we shall prove that for two functions $f$ and $g$, $\langle f,g \rangle= \langle \tilde{f}, \tilde{g} \rangle $, where $\tilde{f}$ and $\tilde{g}$ are the set functions corresponding to $f$ and $g$. Here, $\langle \tilde{f}, \tilde{g}\rangle$ is as defined in \eqref{def:inner}
   
\end{remark}
\begin{remark}
    Given a set function $f$, we would ideally like to define
    \begin{equation}\label{eq:integral}
    \int_{a}^{b} f(x) (dx)^{\alpha}= \lim_{||\mathcal{P}||\to 0} \sum_{j=1}^{l} f((a_{j-1},a_{j}])(a_{j}-a_{j-1})^{\alpha} ~\text{(provided it exists)}
    \end{equation}
    for a partition $\mathcal{P}= \{ a_{0}=a< a_{1}<\ldots<a_{l}=b\}$ and $||\mathcal{P}||= \max_{j}(a_{j}-a_{j-1})$. It is easy to see that $\int_{a}^{b} \frac{df(x)}{(dx)^{\alpha}} (dx)^{\alpha}= f(b)-f(a)$.
    However, there are some issues one needs to be careful about. According to Remark \ref{rem:hausdorff}, the existence of the limit in the r.h.s. of \eqref{eq:integral} requires a lot of cancellations. Hence, many set functions which have $0$ $L^2$ norm, would not have a finite integral. These issues have been carefully addressed in Remarks \ref{rem:intequi} and \ref{rem:limitspace}. One might look at these. 
\end{remark}
\begin{remark}\label{rem:hausdorff}(Comparison with Hausdorff measures )
One might note that a similar concept exists for the definition of Hausdorff measures. For example, for any $\alpha >0$, an outer measure of a set $S$ is constructed in the following way. Let $\{ U_i\}_{i=1}^{\infty}$ be a countable cover of $S$ such that $\mathrm{diam}(U_{i})\le \delta$. Then 
\[
H_{\alpha}(S)= \lim_{\delta \to 0} \inf \left\{ \sum_{i=1}^{\infty} \left( \mathrm{diam}(U_{i}) \right)^{\alpha}  \right\}
\] 
In particular, taking $U_{i}$'s to be half open intervals and $g$ to be identically $1$ one gets similar concepts. However, the function $g=1$ is not integrable over any interval of the form $(a,b]$ as it's value is $\infty$. In particular, in order to get a finite value over an interval $(a,b]$, one needs a lot of cancellations like the quantity $\frac{dB(x)}{\sqrt{dx}}$. In general, one will not get a finite value over an interval by integrating standard functions $g$ with respect to $(dx)^{\alpha}$.
\end{remark}


\begin{remark} 
We would like to mention that not every set function will be a derivative. In particular, for $g= \frac{dF}{(dx)^{\alpha}}$, we need the set function $g((a,b])(b-a)^{\alpha}$ to be $\sigma$ additive.
\end{remark}

\noindent 
In this paper, we shall only need derivative with respect to $\sqrt{dx}$. So, from now on, we shall only work with $\frac{dF(x)}{\sqrt{dx}}$. 

\noindent 
\textbf{Equivalence classes of set functions:}\\
From Definition \ref{def:dersetalpha}, it might appear that we can define $\frac{dF(x)}{(dx)^{\alpha}}$ for any function. However, we need to be aware of some issues. Actually, the derivative with respect to $(dx)^{\alpha}$ gives a measurement of smoothness. For example, $\frac{dB(x)}{\sqrt{dx}}$ measures how much ``smooth" the Brownian motion is with respect to $\sqrt{dx}$. Now, consider a differentiable function $F(x)$. This has a higher order smoothness than $\frac{1}{2}$ Holder continuity. Indeed, if we define 
\[
g(x)= \lim_{h \to 0}\frac{F(x+h)- F(x)}{|h|^{\frac{1}{2}}},
\]
then this limit exists and is equal to $0$. In other words, we should get $\frac{dF(x)}{\sqrt{dx}}=0$. This is analogous to the case $\frac{dF(x)}{dx}=0$ whenever $F(x)$ is a constant function (constant functions have higher order of smoothness than differentiable functions). This phenomenon should reflect in the set function definition and it indeed does. In particular, if we take function $F$ with $0$ quadratic variation and consider the set functions $\frac{dF(x)}{(dx)^{\frac{1}{2}}}$ as in Definition \ref{def:dersetalpha}, then 
\begin{equation}
    \begin{split}
        &\int_{0}^{1} \left(\frac{dF(x)}{\sqrt{(dx)}}\right)^2 dx \\
        &= \lim_{||\mathcal{P}||\to 0} \sum_{j=1}^{l} \left( F(a_{j}) - F(a_{j-1})\right)^2 =0.
    \end{split}
\end{equation}
So in particular, $\frac{dF(x)}{\sqrt{(dx)}}$ is a vector which has $0$ $L^{2}$ norm. This forces us to define equivalence classes of set functions in the following way.
\begin{definition}\label{def:equivalence}
    We consider two set functions $F$ and $G$.
    We say $F$ and $G$ are equivalent if
    \[
      \int_{0}^{1} (F(x)-G(x))^2 dx= \lim_{||\mathcal{P}||\to 0} \sum_{j=1}^{l} \left(F((a_{j-1},a_{j}])-G((a_{j-1},a_{j}])\right)^{2} |a_{j}-a_{j-1}|= 0. 
    \]
\end{definition}
It can be easily checked that this is indeed an equivalence class.
\begin{remark}\label{rem:intequi}
 Observe that all set functions $F(x)$ such that $\int_{0}^{1} F^{2}(x)dx=0$ will not be integrable with respect to $\sqrt{dx}$. We have argued in Remark \ref{rem:hausdorff} that if the limit 
 \[
\lim_{||\mathcal{P}||\to 0} \sum_{j=1}^{l} F((a_{j-1},a_{j}]) \sqrt{a_{j}-a_{j-1}}
 \]
 exists, then we need a lot of cancellations.  In particular, we can have set functions $G$ such that $\int_{0}^{1} G(x)^{2}dx=0$ but still $\int_{0}^{1}G(x)\sqrt{dx}$ does not make sense.
One might consider the following example. Take $G((a,b])=(b-a)^{\frac{1}{4}}$. Observe that
\begin{equation}
\begin{split}
 &\lim_{||\mathcal{P}||\to 0} \sum_{j=1}^{l}G^{2}((a_{j-1},a_{j}])(a_{j}-a_{j-1})\\
 & \lim_{||\mathcal{P}||\to 0} \sum_{j=1}^{l} (a_{j}-a_{j-1})^{\frac{3}{2}} =0.
\end{split}
\end{equation}
but 
\[
\lim_{||\mathcal{P}||\to 0} \sum_{j=1}^{l}G((a_{j-1},a_{j}])(a_{j}-a_{j-1})^{\frac{1}{2}} = \lim_{||\mathcal{P}||\to 0}\sum_{j=1}^{l} (a_{j}-a_{j-1})^{\frac{3}{4}}
\]
doesn't exists. These set functions are equivalent to the $0$ vector. Let $F(x)$ be a function that has $0$ quadratic variation. Then any set function $G$ which is equivalent to $\frac{dF(x)}{\sqrt{dx}}$ has:
\[
\int_{0}^{1} \left( G(x) \right)^{2} dx= 0.
\]  
\end{remark}
 

\noindent
\textbf{Representing measurable functions as set functions:}\\
\begin{proposition}\label{prop:zero}
    Consider $G$ to be a set function on intervals. Let $\int_{0}^{1} G(x)^{2} dx =0$. Then for any $a,b \in [0,1]$, $\int_{a}^{b}G(x)dx=0$.
\end{proposition}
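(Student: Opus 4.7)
The target statement is a Cauchy--Schwarz style comparison between the $L^{2}$-type integral $\int_{0}^{1} G^{2}\,dx$ and the $L^{1}$-type integral $\int_{a}^{b} G\,dx$, both defined as partition limits of Riemann-type sums in the sense of \eqref{def:inner} and \eqref{eq:integral} with $\alpha=1$. The plan has two steps: first localize the hypothesis from $[0,1]$ to the subinterval $[a,b]$, and then apply Cauchy--Schwarz term-by-term on the partition sums for $[a,b]$.

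\emph{Step 1: localization.} I would first argue that $\int_{0}^{1} G(x)^{2}\,dx=0$ forces $\int_{a}^{b} G(x)^{2}\,dx=0$. Given a partition $\mathcal{P}'=\{a=a_{0}<\cdots<a_{l}=b\}$ of $[a,b]$ with mesh $||\mathcal{P}'||<\delta$, extend it to a partition $\mathcal{P}$ of $[0,1]$ with $||\mathcal{P}||<\delta$ by adjoining points in $[0,a]$ and $[b,1]$. Every summand $G((a_{j-1},a_{j}])^{2}(a_{j}-a_{j-1})$ is nonnegative, so the sum associated to $\mathcal{P}'$ is bounded above by the sum associated to $\mathcal{P}$. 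Letting $||\mathcal{P}'||\to 0$ and invoking the hypothesis shows that the $\mathcal{P}'$-sum tends to $0$, i.e.\ $\int_{a}^{b} G(x)^{2}\,dx=0$.

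\emph{Step 2: Cauchy--Schwarz.} For any partition $\mathcal{P}'=\{a_{0}<\cdots<a_{l}\}$ of $[a,b]$, writing $G((a_{j-1},a_{j}])(a_{j}-a_{j-1})=\bigl[G((a_{j-1},a_{j}])\sqrt{a_{j}-a_{j-1}}\bigr]\cdot\sqrt{a_{j}-a_{j-1}}$ and applying Cauchy--Schwarz yields
\begin{equation*}
\left|\sum_{j=1}^{l} G((a_{j-1},a_{j}])(a_{j}-a_{j-1})\right|
\;\leq\;\sqrt{(b-a)\sum_{j=1}^{l}G((a_{j-1},a_{j}])^{2}(a_{j}-a_{j-1})}.
\end{equation*}
By Step 1 the right-hand side tends to $0$ as $||\mathcal{P}'||\to 0$, so by the definition of the integral in \eqref{eq:integral} with $\alpha=1$ the limit on the left exists and equals $0$, i.e.\ $\int_{a}^{b}G(x)\,dx=0$. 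The case $a=b$ is vacuous.

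The argument is largely routine once one accepts that the partition limit is compatible with restriction to a subinterval, and this is the only delicate point: a priori $\int_{0}^{1}G^{2}\,dx$ is defined via partitions of $[0,1]$, and one must confirm that passing to $[a,b]$ preserves the value $0$. Nonnegativity of the summands makes this immediate, which is why the ``main obstacle'' is really cosmetic. Notably, no difficulty is introduced by the pathological set functions discussed in Remark \ref{rem:intequi} (such as $dB/\sqrt{dx}$) since the hypothesis $\int_{0}^{1} G^{2}\,dx=0$ excludes those objects from consideration.
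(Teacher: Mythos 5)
Your proof is correct and follows essentially the same route as the paper's: both reduce to the Cauchy--Schwarz bound $\bigl|\sum_j G((a_{j-1},a_j])(a_j-a_{j-1})\bigr|^2 \le (b-a)\sum_j G^2((a_{j-1},a_j])(a_j-a_{j-1})$ over partitions of $[a,b]$ and let the mesh tend to zero. The only difference is that you spell out the localization step (extending a partition of $[a,b]$ to one of $[0,1]$ and using nonnegativity of the summands to deduce $\int_a^b G^2\,dx=0$), which the paper asserts implicitly without justification.
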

\begin{proof}
    This result is a simple use of Cauchy-Schwarz inequality. We take any $a,b \in [0,1]$. Since $\int_{0}^{1} G(x)^{2}dx = 0$, we have 
    \[
    \lim_{||\mathcal{P}||\to 0} \sum_{j=1}^{l} G^2((a_{j-1},a_{j}])(a_{j}-a_{j-1}) = 0.
    \]
    Here, $\mathcal{P}$ is a partition $\{ a_{0}=a< a_{1}< \ldots <a_{l}=b \}$. Now by Cauchy-Schwarz inequality, 
    \begin{equation}\label{eq:cauchy}
        \begin{split}
            &\left(\sum_{j=1}^{l} \left|G((a_{j-1}-a_{j}])\right|\sqrt{(a_{j}-a_{j-1})}\sqrt{(a_{j}-a_{j-1})}\right)^2\\
            & \le \left( \sum_{j=1}^{l} G^{2}((a_{j-1},a_{j}]) (a_{j}-a_{j-1}) \right)\left(\sum_{j=1}^{l}(a_{j}-a_{j-1})\right)\\
            &=(b-a) \left( \sum_{j=1}^{l} G^{2}((a_{j-1},a_{j}]) (a_{j}-a_{j-1}) \right).
        \end{split}
    \end{equation}
    Hence, the limit in L.H.S. of \eqref{eq:cauchy} goes to $0$ as $||\mathcal{P}||\to 0$. This completes the proof.
\end{proof}
Given a measurable function $f \in L^{2}[0,1]$, we consider the set function $\tilde{f}((a,b])= \frac{\int_{a}^{b}f(x)dx}{(b-a)}$.
The following two Propositions prove that there is no problem defining measurable functions as set functions. Suppose, we consider two measurable functions $f$ and $g$ such that the set functions $\frac{\int_{a}^{b}f(x)dx}{(b-a)}, \frac{\int_{a}^{b}g(x)dx}{(b-a)}$ are equivalent. Then Proposition \ref{prop:well} proves that they are almost surely same. By Proposition \ref{prop:zero}, it is enough to consider the case 
$\frac{\int_{a}^{b}f(x)dx}{(b-a)}= \frac{\int_{a}^{b}g(x)dx}{(b-a)}.$ 
\begin{proposition}\label{prop:well}
Let $f$ and $g$ be two functions in $L^2[0,1]$ with respect to the Lebesgue measure such that they give rise to the same set function $\tilde{f}$, i.e., $\int_{a}^{b} f(x) dx= \int_{a}^{b} g(x) dx$ for all $a,b \in [0,1]$. Then $f= g$ almost surely with respect to the Lebesgue measure.
\end{proposition}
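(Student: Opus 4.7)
My plan is to set $h := f - g \in L^2[0,1] \subset L^1[0,1]$, so that the hypothesis becomes $\int_a^b h(x)\,dx = 0$ for all $0 \le a < b \le 1$, and to show $h = 0$ almost everywhere with respect to Lebesgue measure.

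The cleanest route is via the Lebesgue differentiation theorem. Define the cumulative function $F(x) := \int_0^x h(t)\,dt$ on $[0,1]$. By hypothesis, $F(x) = F(x) - F(0) = \int_0^x h(t)\,dt = 0$ for every $x \in [0,1]$ (taking $a = 0$, $b = x$). In particular $F$ is identically zero, hence trivially differentiable everywhere with $F'(x) = 0$. On the other hand, since $h \in L^1[0,1]$, the Lebesgue differentiation theorem guarantees that $F$ is differentiable almost everywhere with $F'(x) = h(x)$ for a.e.\ $x \in [0,1]$. Comparing the two identifications of $F'$ yields $h(x) = 0$ for a.e.\ $x$, which is exactly $f = g$ almost surely with respect to the Lebesgue measure.

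An alternative, slightly more self-contained route, if one wishes to avoid quoting Lebesgue differentiation, is to observe that the set function $\nu(E) := \int_E h(x)\,dx$ is a finite signed Borel measure on $[0,1]$ that vanishes on the $\pi$-system of half-open intervals $(a,b]$. Since this $\pi$-system generates the Borel $\sigma$-algebra, a standard $\pi$-$\lambda$ or monotone class argument (applied to the positive and negative parts of $\nu$) extends the vanishing to all Borel sets. Taking $E = \{h > 0\}$ and $E = \{h < 0\}$ then forces $h = 0$ almost everywhere.

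There is no real obstacle here; the statement is a standard consequence of the fundamental theorem of calculus for Lebesgue integrals, and the only choice is between the Lebesgue differentiation approach and the measure-uniqueness approach. I would go with the former because it is shorter and directly produces the pointwise a.e.\ identification that the conclusion asks for.
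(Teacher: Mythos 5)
Your proof is correct, but both of your routes differ from what the paper actually does. The paper argues by contradiction using only elementary approximation: it supposes $\mu(\{f>g\})>0$, pins down a set $D=\{f>g+1/n_{\varepsilon}\}$ of positive measure $\varepsilon$, approximates $D$ in symmetric difference by a \emph{finite} union of half-open intervals $S$ (for which $\int_{S}f=\int_{S}g$ follows directly from the hypothesis), and then shows via a Cauchy--Schwarz estimate on the error terms over $S\triangle D$ that $\int_{S}f>\int_{S}g$, a contradiction. This keeps the argument self-contained within the paper's set-function framework, quoting nothing beyond the regularity of Lebesgue measure (every measurable set is well approximated by a finite union of intervals) and Cauchy--Schwarz. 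Your first route, via the vanishing primitive $F(x)=\int_{0}^{x}h$ together with the Lebesgue differentiation theorem, is shorter and more classical, but it imports a comparatively heavy theorem. Your second route, a $\pi$--$\lambda$ uniqueness argument showing that the signed measure $\nu(E)=\int_{E}h$ vanishes on all Borel sets because it vanishes on the generating $\pi$-system of intervals, is conceptually the same idea as the paper's (vanishing on intervals propagates to vanishing on measurable sets) but phrased at the level of measure-uniqueness rather than hands-on approximation; one small technical note is that the family $\{E:\nu(E)=0\}$ is already a $\lambda$-system for a finite signed measure, so you need not split into positive and negative parts. All three arguments are sound; the paper's trades brevity for minimal prerequisites, while yours are cleaner if one is willing to cite standard real-analysis machinery.
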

\begin{proof}
    We prove this by contradiction. Suppose $f \neq g$ almost surely. Let $A= \{ f >g \}$ and $B= \{ f< g\}$. Under the hypothesis, we have $\max\{ \mu(A), \mu(B) \} >0$. Without loss of generality, assume that $\mu(A)>0$. Now $A= \cup_{n \in \mathbb{N}}\{ f> g+ \frac{1}{n} \}$. So, there exists $\varepsilon >0$ and $n_{\varepsilon} \in \mathbb{N}$ such that $\mu(\{ f> g+ \frac{1}{n_{\varepsilon}} \}) > \varepsilon$. Let us define $D:=\{ f> g+ \frac{1}{n_{\varepsilon}} \}$. $D$ is Lebesgue measurable. So, there is $S$ such that $S= \cup_{i=1}^{m}(a_{i},b_{i}]$ and $\mu(S \triangle D) < \frac{\varepsilon}{\kappa}$. Here, the constant $\kappa$ will be chosen later. Now, 
    \[
    \int_{S} f(x) dx= \sum_{i=1}^{m} \tilde{f}((a_{i},b_{i}])(b_{i}-a_{i})= \int_{S} g(x)dx.
    \]
    However, 
    \begin{equation}
        \begin{split}
          \int_{S} f(x) dx&= \int_{D\cap S} f(x) dx + \int_{S \backslash D} f(x) dx\\
          &\ge \int_{D\cap S} g(x) dx + \frac{1}{n_{\varepsilon}} \mu(D\cap S) - \left| \int_{S\backslash D} f(x) dx\right|\\
          & \ge \int_{D\cap S} g(x) dx + \frac{\varepsilon}{n_{\varepsilon}}\left(1- \frac{1}{\kappa} \right) - ||f||_{2}(\mu(S \backslash D))^{\frac{1}{2}}\\
          & \ge \int_{S} g(x) dx + \frac{\varepsilon}{n_{\varepsilon}}\left(1- \frac{1}{\kappa} \right) - \left(||g||_{2}+||f||_{2}\right) \sqrt{\frac{\varepsilon}{\kappa}}.
        \end{split}
    \end{equation}
    Now, we choose $\kappa$ large enough so that 
    \[
      \frac{\varepsilon}{n_{\varepsilon}}\left(1- \frac{1}{\kappa} \right) \ge 2 \left(||g||_{2}+||f||_{2}\right) \sqrt{\frac{\varepsilon}{\kappa}}.
    \]
    We can do this as all the other parameters are previously fixed. This gives 
    \[
    \int_{S} f(x) dx \ge \int_{S} g(x) dx + \frac{\varepsilon}{2n_{\varepsilon}}\left(1- \frac{1}{\kappa} \right).
    \]
    This is impossible.
\end{proof}
Next, we prove that the inner product of two $L^2$ functions remains unchanged when the functions are replaced by corresponding set functions. 
\begin{proposition}\label{prop:inner}
    Let us consider two measurable functions $f_{1}$ and $f_{2}$. Let $\tilde{f}_{1}$ and $\tilde{f}_{2}$ be the corresponding set functions respectively. Then $\langle f_{1}, f_{2} \rangle = \langle \tilde{f}_{1}, \tilde{f}_{2} \rangle $.
\end{proposition}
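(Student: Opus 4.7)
The plan is to reinterpret the Riemann sum appearing in the definition of $\langle \tilde{f}_1, \tilde{f}_2\rangle$ as the ordinary $L^2$ inner product of two step-function approximations of $f_1$ and $f_2$, and then appeal to a standard $L^2$ density argument. Given a partition $\mathcal{P}=\{0=a_0<a_1<\cdots<a_l=1\}$, I would define the step function
\[
f_i^{\mathcal{P}}(x) = \tilde{f}_i((a_{j-1},a_j]) = \frac{1}{a_j - a_{j-1}} \int_{a_{j-1}}^{a_j} f_i(y)\,dy, \qquad x \in (a_{j-1},a_j].
\]
With this notation, the generic term of the Riemann sum in \eqref{def:inner} is
$\tilde{f}_1((a_{j-1},a_j])\tilde{f}_2((a_{j-1},a_j])(a_j-a_{j-1}) = \int_{a_{j-1}}^{a_j} f_1^{\mathcal{P}}(x)f_2^{\mathcal{P}}(x)\,dx$,
so the sum itself is exactly $\langle f_1^{\mathcal{P}}, f_2^{\mathcal{P}}\rangle_{L^2[0,1]}$. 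Thus the assertion reduces to showing $\langle f_1^{\mathcal{P}}, f_2^{\mathcal{P}}\rangle \to \langle f_1, f_2\rangle$ as $\|\mathcal{P}\| \to 0$.

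The key analytic input is the $L^2$ convergence $f_i^{\mathcal{P}} \to f_i$. First I would note that $f \mapsto f^{\mathcal{P}}$ is an $L^2$ contraction: by the Cauchy--Schwarz inequality applied on each subinterval,
\[
\|f^{\mathcal{P}}\|_2^2 = \sum_{j=1}^{l} (a_j-a_{j-1}) \left(\frac{1}{a_j-a_{j-1}}\int_{a_{j-1}}^{a_j} f\,dx\right)^{\!2} \le \sum_{j=1}^{l}\int_{a_{j-1}}^{a_j} f(x)^2\,dx = \|f\|_2^2.
\]
For $f$ continuous on $[0,1]$, uniform continuity immediately gives $\|f^{\mathcal{P}} - f\|_\infty \to 0$ and hence $\|f^{\mathcal{P}} - f\|_2 \to 0$. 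A standard $\varepsilon/3$ approximation using the density of continuous functions in $L^2[0,1]$ and the contraction property above then extends this convergence to every $f \in L^2[0,1]$.

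Finally, I would combine these ingredients via bilinearity and Cauchy--Schwarz:
\[
\bigl|\langle f_1^{\mathcal{P}}, f_2^{\mathcal{P}}\rangle - \langle f_1,f_2\rangle\bigr| \le \|f_1^{\mathcal{P}} - f_1\|_2 \,\|f_2^{\mathcal{P}}\|_2 + \|f_1\|_2\, \|f_2^{\mathcal{P}} - f_2\|_2,
\]
and the right-hand side tends to zero by the contraction bound $\|f_2^{\mathcal{P}}\|_2 \le \|f_2\|_2$ together with the $L^2$ convergence established above. The main obstacle, and the only nontrivial step, is justifying $\|f_i^{\mathcal{P}} - f_i\|_2 \to 0$ for general $L^2$ functions; once that density argument is in place the rest is purely algebraic. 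Everything takes place inside $L^2[0,1]$, so no extension-space machinery from Section~\ref{sec:nonmeasure} is needed here.
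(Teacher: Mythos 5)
Your proof is correct but routes through a different decomposition than the paper's. The paper first approximates $f_{1},f_{2}$ by \emph{fixed} step functions $f_{1,\varepsilon},f_{2,\varepsilon}$ (on intervals with rational endpoints), proves the claim directly for step functions by "Riemann-type" arguments, establishes a uniform Cauchy--Schwarz bound
$\bigl|\langle\tilde f,\tilde g\rangle_{\mathcal P}\bigr|\le \|f\|_2\|g\|_2$, and then expands $\langle\tilde f_{1},\tilde f_{2}\rangle_{\mathcal P}$ into four cross terms and squeezes. You instead recognize the partition sum as $\langle f_1^{\mathcal P}, f_2^{\mathcal P}\rangle_{L^2}$, where $f^{\mathcal P}$ is the conditional expectation of $f$ onto the partition $\sigma$-algebra; the contraction estimate you prove is the same Cauchy--Schwarz step as the paper's bound, but reinterpreted as the statement that conditional expectation is an $L^2$ projection. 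From there the convergence $f^{\mathcal P}\to f$ in $L^2$ (via continuity plus density, or equivalently martingale convergence along refining partitions) does all the work, and the bilinearity/Cauchy--Schwarz finish is standard. Your route is arguably cleaner and more conceptual because the partition-adapted averaging makes the structure of the sum transparent, avoids introducing a second sequence of approximants $f_{i,1/k}$, and sidesteps the somewhat informal "arguments similar to Riemann integration" step for step functions. The paper's route has the minor advantage that the uniform bound it proves along the way (displayed as equation \eqref{eq:bound}) is reused later in the paper when establishing orthogonality of $\frac{dB(x)}{\sqrt{dx}}$ to measurable functions; your version packages that same estimate as a contraction property, which serves equally well but is localized to this proof. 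Both proofs are sound and complete.
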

\begin{proof}
    First, consider the case where $f_{1}$ and $f_{2}$ are step functions on intervals, i.e., $f_{1}= \sum_{i=1}^{l_{1}} c_{1,i} \mathbb{I}_{(a_{i},b_{i}]}$ and $f_{2}= \sum_{j=1}^{l_{2}} c_{2,j} \mathbb{I}_{(a_{j},b_{j}]}$. One can prove that in this case, $\langle f_{1},f_{2} \rangle = \langle \tilde{f}_{1}, \tilde{f}_{2} \rangle $ by the arguments similar to Riemann integration. Now, we move on to general measurable functions. Let $f_{1}$ and $f_{2}$ be two functions in $L^{2}[0,1]$. For any given $\varepsilon >0$, we can find two step functions on intervals $f_{1,\varepsilon}$ and $f_{2,\varepsilon}$ such that 
    $\max\{||f_{1}-f_{1,\varepsilon}||_{2}, ||f_{2}-f_{2,\varepsilon}||_{2} \} < \varepsilon$.
    We also know that $\langle \tilde{f}_{1,\varepsilon}, \tilde{f}_{2,\varepsilon} \rangle = \langle f_{1,\varepsilon}, f_{2,\varepsilon}\rangle$ and we can find sequences of functions $f_{1, \frac{1}{k}}$ and $f_{2,\frac{1}{k}}$ such that 
    $\langle f_{1,\frac{1}{k}}, f_{2, \frac{1}{k}} \rangle \to \langle f_{1}, f_{2} \rangle $.

    \noindent
    We now prove a general bound for 
    \[
       \left|\sum_{j=1}^{l} \tilde{f}((a_{j-1},a_{j}]) \tilde{g}((a_{j-1},a_{j}]) (a_{j}-a_{j-1})\right|
    \]
    for two $L^2$ functions $f$ and $g$.
    \begin{equation}\label{eq:bound}
        \begin{split}
         &\left| \sum_{j=1}^{l} \tilde{f}((a_{j-1},a_{j}]) \tilde{g}((a_{j-1},a_{j}]) (a_{j}-a_{j-1})\right|\\
         &= \left| \sum_{j=1}^{l} \tilde{f}((a_{j-1},a_{j}]) \tilde{g}((a_{j-1},a_{j}]) \sqrt{(a_{j}-a_{j-1})} \sqrt{(a_{j}-a_{j-1})}\right|\\
         & \le \left| \sum_{j=1}^{l} \tilde{f}^2((a_{j-1},a_{j}])(a_{j}-a_{j-1}) \right|^{\frac{1}{2}}\times \left| \sum_{j=1}^{l} \tilde{g}^2((a_{j-1},a_{j}])(a_{j}-a_{j-1}) \right|^{\frac{1}{2}} \\
         &= \left| \sum_{j=1}^{l} \frac{\left(\int_{a_{j-1}}^{a_{j}} f(x) dx\right)^2}{(a_{j}-a_{j-1})}\right|^{\frac{1}{2}} \times \left| \sum_{j=1}^{l} \frac{\left(\int_{a_{j-1}}^{a_{j}} g(x) dx\right)^2}{(a_{j}-a_{j-1})}\right|^{\frac{1}{2}}\\
         &\le \left| \sum_{j=1}^{l} \int_{a_{j-1}}^{a_{j}} f^{2}(x)dx \right|^{\frac{1}{2}} \times \left| \sum_{j=1}^{l} \int_{a_{j-1}}^{a_{j}} g^{2}(x)dx \right|^{\frac{1}{2}}\\
         & = ||f||_{2}||g||_{2}.
        \end{split}
    \end{equation}
    For any partition $\mathcal{P}=\{ a_{0}=0< a_{1}< \ldots < a_{l}=1 \}$, we denote, 
    \[
    \langle \tilde{f}, \tilde{g} \rangle_{\mathcal{P}}:= \sum_{j=1}^{l} \tilde{f}((a_{j-1},a_{j}]) \tilde{g}((a_{j-1},a_{j}]) (a_{j}-a_{j-1}).
    \]
    Hence,
    \begin{equation}
         \langle \tilde{f}_{1}, \tilde{f}_{2} \rangle_{\mathcal{P}}=  \langle \tilde{f}_{1,\frac{1}{k}}, \tilde{f}_{2, \frac{1}{k}} \rangle_{\mathcal{P}} + \langle \tilde{f}_{1}-\tilde{f}_{1,\frac{1}{k}}, \tilde{f}_{2, \frac{1}{k}} \rangle_{\mathcal{P}} + \langle \tilde{f}_{1,\frac{1}{k}}, \tilde{f}_{2}-\tilde{f}_{2, \frac{1}{k}} \rangle_{\mathcal{P}} + \langle \tilde{f}_{1}-\tilde{f}_{1,\frac{1}{k}}, \tilde{f}_{2}-\tilde{f}_{2, \frac{1}{k}} \rangle_{\mathcal{P}}
    \end{equation}
    Hence, 
    \begin{equation} \label{eq:compare}
        \begin{split}
     &\langle {f}_{1,\frac{1}{k}}, {f}_{2, \frac{1}{k}} \rangle - \frac{1}{k}\left( ||\tilde{f}_{1, \frac{1}{k}}||_{2} +  ||\tilde{f}_{2, \frac{1}{k}}||_{2} + \frac{1}{k}\right)\\
     &\le \liminf_{||\mathcal{P}|| \to 0} \langle \tilde{f}_{1}, \tilde{f}_{2} \rangle_{\mathcal{P}} \le \limsup_{||\mathcal{P}|| \to 0} \langle \tilde{f}_{1}, \tilde{f}_{2} \rangle_{\mathcal{P}}\\
     &\le   \langle {f}_{1,\frac{1}{k}}, {f}_{2, \frac{1}{k}} \rangle + \frac{1}{k}\left( ||\tilde{f}_{1, \frac{1}{k}}||_{2} +  ||\tilde{f}_{2, \frac{1}{k}}||_{2} + \frac{1}{k}\right). 
        \end{split}
    \end{equation}
  \eqref{eq:compare} holds for any $k$. So, we take $k$ to $\infty$. Next, we observe that $ \langle f_{1,\frac{1}{k}} , f_{2,\frac{1}{k}}\rangle \to \langle f_{1}, f_{2} \rangle$, $||f_{1,\frac{1}{k}}||_{2} \to ||f_{1}||_{2}$ and $||f_{2, \frac{1}{k}}||_{2} \to ||f_{2}||_{2}$. Hence, the limit in \eqref{eq:compare} actually exists and is equal to $\langle f_{1}, f_{2} \rangle $.  
\end{proof}
\noindent 
\textbf{Properties of $\frac{dB(x)}{\sqrt{dx}}$:}
As the quantity $\frac{dB(x)}{\sqrt{dx}}$ is a quantity of fundamental interest to us, we would mention two of it's important properties. Firstly, it's orthogonality with any random/nonrandom measurable function and secondly, the fact that $\int_{0}^{1} \left( \frac{dB(x)}{\sqrt{dx}} \right)^{2} dx =1$ with high probability. 
\begin{lemma}
 $\sup_{f \in L^{2}[0,1] ~|~ ||f||_{2}=1}\left|\langle f, \frac{dB(x)}{\sqrt{dx}} \rangle\right|$ is well defined and is $0$ with probability $1$. Further, $\int_{0}^{1} \left( \frac{dB(x)}{\sqrt{dx}} \right)^{2} dx=1$ with probability $1$.
\end{lemma}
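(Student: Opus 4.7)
My plan is to prove the two claims separately but using the same underlying $L^{2}$ estimate on quadratic sums in the Brownian increments $B(a_{j})-B(a_{j-1})$ over partitions $\mathcal{P}=\{0=a_{0}<a_{1}<\cdots<a_{l}=1\}$. These increments are independent Gaussians with variance $a_{j}-a_{j-1}$, which is what makes all the computations below elementary.

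I would begin with the easier identity $\int_{0}^{1}\bigl(dB(x)/\sqrt{dx}\bigr)^{2}\,dx=1$ almost surely. Substituting the set-function definition of $dB/\sqrt{dx}$ into \eqref{def:inner} reduces the Riemann sum to $\sum_{j}(B(a_{j})-B(a_{j-1}))^{2}$, which has mean $1$ and variance $2\sum_{j}(a_{j}-a_{j-1})^{2}\le 2\|\mathcal{P}\|$. Choosing any sequence of partitions with mesh shrinking fast enough (e.g.\ $\|\mathcal{P}_{n}\|\le n^{-2}$) and applying Borel--Cantelli pins down the almost sure limit to be $1$; this is the classical Brownian quadratic-variation statement.

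Next I would handle the orthogonality for a fixed $f\in L^{2}[0,1]$ with $\|f\|_{2}=1$. The Riemann sum defining $\langle f,dB/\sqrt{dx}\rangle_{\mathcal{P}}$ equals $\sum_{j}\tilde{f}((a_{j-1},a_{j}])(B(a_{j})-B(a_{j-1}))\sqrt{a_{j}-a_{j-1}}$, a mean-zero Gaussian with variance $\sum_{j}\tilde{f}^{2}((a_{j-1},a_{j}])(a_{j}-a_{j-1})^{2}\le \|\mathcal{P}\|\cdot\|f\|_{2}^{2}$, where I invoke the bound $\sum_{j}\tilde{f}^{2}((a_{j-1},a_{j}])(a_{j}-a_{j-1})\le \|f\|_{2}^{2}$ already established in the proof of Proposition \ref{prop:inner}. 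Borel--Cantelli along a geometric sequence of meshes then yields $\langle f,dB/\sqrt{dx}\rangle=0$ almost surely for each individual~$f$.

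The main obstacle is upgrading ``for each $f$, almost surely'' to ``almost surely, for all unit~$f$'', since there are uncountably many such $f$. My approach is to fix a countable dense family $\{f_{k}\}_{k\ge 1}$ of unit vectors in $L^{2}[0,1]$ (for instance, rational step functions supported on intervals with rational endpoints, which are dense by Proposition \ref{prop:well} and Remark \ref{rem:measurable}), intersect the full-measure events for each $f_{k}$ with the quadratic-variation event from the previous paragraph, and call the resulting full-measure event $\Omega_{0}$. On $\Omega_{0}$, given an arbitrary unit $f$ and approximation $f_{k}$ with $\|f-f_{k}\|_{2}<1/k$, the Cauchy--Schwarz step from the proof of Proposition \ref{prop:inner} applied to $f-f_{k}$ against $dB/\sqrt{dx}$ yields
\[
\bigl|\langle f-f_{k},\,dB/\sqrt{dx}\rangle_{\mathcal{P}}\bigr|\;\le\;\|f-f_{k}\|_{2}\cdot\Bigl(\sum_{j}(B(a_{j})-B(a_{j-1}))^{2}\Bigr)^{1/2},
\]
whose $\limsup$ as $\|\mathcal{P}\|\to 0$ is at most $\|f-f_{k}\|_{2}\le 1/k$ since the right-hand factor converges to $1$ on $\Omega_{0}$. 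Combined with $\langle f_{k},dB/\sqrt{dx}\rangle_{\mathcal{P}}\to 0$, this forces the limit along $\mathcal{P}$ for $f$ to exist and equal $0$, uniformly over unit $f$ (up to $1/k$ for every $k$), so the supremum is $0$ on $\Omega_{0}$. The only subtle point to verify is that the limit in $\|\mathcal{P}\|\to 0$ (a directed limit over all partitions, not just a preferred sequence) genuinely exists; the Cauchy-type sandwich above controls both $\limsup$ and $\liminf$ simultaneously, so this is handled in one stroke.
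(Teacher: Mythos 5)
Your overall strategy---prove the claim for a countable family via a direct Gaussian computation, intersect full-measure events, and extend to arbitrary unit $f$ through the Cauchy--Schwarz bound of the form \eqref{eq:bound}---is the same as the paper's. The paper works with a single rational indicator $c\,\mathbb{I}_{(a,b]}$, uses linearity to pass to rational step functions, then extends; you compute the variance directly for general $f$ and use a countable dense subset of the unit sphere. Your version is slightly more streamlined, and your explicit use of Borel--Cantelli along a fast-mesh sequence is more careful than the paper, which only notes that variances vanish and does not pin down the mode of convergence.

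The one place where you overclaim is the final sentence: ``the Cauchy-type sandwich above controls both $\limsup$ and $\liminf$ simultaneously, so this is handled in one stroke.'' It does not, because your Borel--Cantelli argument only produces a full-measure event on which the quadratic-variation sums and the $\langle f_k, dB/\sqrt{dx}\rangle_{\mathcal{P}}$ converge along the chosen fast sequence of partitions $\mathcal{P}_n$, not as a net limit over all partitions with $\|\mathcal{P}\|\to 0$. Over the full directed set the right-hand factor of your Cauchy--Schwarz bound is not controlled: by the L\'evy--Taylor pathwise result on the $2$-variation of Brownian motion, $\limsup_{\|\mathcal{P}\|\to 0}\sum_j (B(a_j)-B(a_{j-1}))^2=\infty$ almost surely, so the sandwich yields nothing outside your preferred subsequence. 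To be fair, the paper glosses over exactly the same point (it invokes ``a well-known fact in It\^{o} calculus'' without distinguishing sequence limits from net limits), so this is a defect shared by the source; both proofs close cleanly only once one reinterprets $\lim_{\|\mathcal{P}\|\to 0}$ in \eqref{def:inner} as a limit along a fixed refining (or summably-fast) sequence of partitions rather than a genuine directed limit. Since you explicitly flag the subtlety, you should either adopt that reinterpretation openly or delete the claim that the directed limit is established.
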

\begin{proof}
We at first prove the second part. We take a partition $\mathcal{P}=(0=a_{0}<a_{1}<\ldots< a_{l}=1)$ and write the Riemann type sum for the integral $\int_{0}^{1} \left( \frac{dB(x)}{\sqrt{dx}} \right)^{2} dx$. It is given by 
\begin{equation}\label{eq:dbx2}
\begin{split}
&\sum_{j=1}^{l} \left(\frac{B(a_{j})-B(a_{j-1})}{(a_{j}-a_{j-1})^{\frac{1}{2}}}\right)^{2}(a_{j}-a_{j-1})\\
&= \sum_{j=1}^{l} \left(B(a_{j}) - B(a_{j-1}) \right)^2.
\end{split}
\end{equation}
It is a well-known fact in Ito calculus that the second display in \eqref{eq:dbx2} goes to $1$ in probability.

\noindent 
Now, we move to the first part of the proof. Firstly, observe that for any partition $\mathcal{P}$,
\begin{equation}
\begin{split}
&\sum_{j=1}^{l} \left(f((a_{j-1},a_{j}])+g((a_{j-1},a_{j}])\right) \left( \frac{dB}{\sqrt{dx}} \right)\left((a_{j-1},a_{j}]\right)(a_{j}-a_{j-1})\\
&= \sum_{j=1}^{l}\left( f((a_{j-1},a_{j}]) \right)\left( \frac{dB}{\sqrt{dx}} \right)\left((a_{j-1},a_{j}]\right)(a_{j}-a_{j-1})+\\
&~~~~~~~ \sum_{j=1}^{l} \left( g((a_{j-1},a_{j}]) \right)\left( \frac{dB}{\sqrt{dx}} \right)\left((a_{j-1},a_{j}]\right)(a_{j}-a_{j-1}).
\end{split}
\end{equation}
Hence, provided the limits exist, $\langle f+g , \frac{dB}{\sqrt{dx}} \rangle = \langle f,  \frac{dB}{\sqrt{dx}}  \rangle + \langle g, \frac{dB}{\sqrt{dx}}  \rangle$. Secondly, we know by Cauchy-Schwarz inequality and \eqref{eq:bound}, 
\begin{equation}
\begin{split}
&\left|\sum_{j=1}^{l}\left( f((a_{j-1},a_{j}]) \right)\left( \frac{dB}{\sqrt{dx}} \right)\left((a_{j-1},a_{j}]\right)(a_{j}-a_{j-1})\right|^2 \\
& \le \left(\int_{0}^{1} f^{2}(x)dx \right)\times \left( \sum_{j=1}^{l} \left(B(a_{j})-B(a_{j-1})\right)^2 \right).
\end{split}
\end{equation}
Hence, when $||f_{n}|| \to 0 $, we have $\langle f_{n}, \frac{dB}{\sqrt{dx}} \rangle  \to 0$.
Now, we consider any Lebesgue measurable function $f(x)$ with $||f(x)||_{2}=1$. From the definition of measurability we know that for every $\varepsilon >0$, there is a function $g_{\varepsilon}(x)= \sum_{i=1}^{K} c_{i} \mathbb{I}_{(a_{i},b_{i}]}$ 
with $c_{i} \in \mathbb{Q}$ and $a_{i}< b_{i} \in \mathbb{Q}$ such that $|| f(x)- g_{\varepsilon}(x) ||^2 < \varepsilon$. Now, we prove $\sup_{c_{i},a_{i},b_{i} \in \mathbb{Q}}\left|\langle g_{\varepsilon}(x) , \frac{dB}{\sqrt{dx}}\rangle\right|=0$ almost everywhere.
To prove this, it is enough to consider $f$ which is equal to $c \in \mathbb{Q}$ on an interval $(a,b]$ such that $a,b \in \mathbb{Q}$ and $0$ otherwise.
Now, for any partition $\mathcal{P}$ of $(a,b]$, consider
\begin{equation}
\begin{split}
&\sum_{j=1}^{l} c \frac{B(a_{j})-B(a_{j-1})}{\left(a_{j}-a_{j-1}\right)^{\frac{1}{2}}}(a_{j}-a_{j-1})\\
&= c \times G(\mathcal{P})
\end{split}
\end{equation}
where $G(\mathcal{P}) \sim N(0, \sum_{j=1}^{l}(a_{j}-a_{j-1})^2)$. Observe that 
\[
\sum_{j=1}^{l} (a_{j}-a_{j-1})^2\le ||\mathcal{P}||(b-a) \to 0
\] 
as $||\mathcal{P}|| \to 0$. So, $\langle f, \frac{dB}{\sqrt{dx}} \rangle=0$ for any $f$ which takes value $c$ on $(a,b]$ and $0$ otherwise. Here $c,a,b \in \mathbb{Q}$. However, there are only countably many choices for $c,a,b$. Hence, we can say $\sup_{c,a,b \in \mathbb{Q}} \left|\langle f, \frac{dB}{\sqrt{dx}} \rangle\right|=0$ with probability $1$. This will imply $\sup_{c_{i},a_{i},b_{i} \in \mathbb{Q}}\left|\langle g_{\varepsilon}(x), \frac{dB(x)}{\sqrt{dx}} \rangle\right| =0$ almost everywhere. Hence, $\sup_{f \in L^{2}~|~ ||f||_{1}=1}\left|\langle f(x), \frac{dB(x)}{\sqrt{dx}} \rangle\right| =0$ almost everywhere for all $f$ measurable. 
\end{proof} 
\subsection{Defining the notion of convergence}
By hypothesis, given any $n$, we have a matrix of dimension $n \times n$. Hence, for every possible $n$, the objects we get are well defined. In particular, $K_{n,1}(f)$ is a well-defined vector on $\mathbb{R}^n$. However, the objects defined in Subsection \ref{subsec:delta} come only as a limit of well-defined objects as $n$ goes to infinity. In this section, we formally define the notion of convergence which will be useful to define the convergence of objects like $K_{n,1}(1)$ to $\frac{dB}{\sqrt{dx}}$.

\noindent 
To begin with, we have realized the Wigner matrix as a kernel operator on the space $L^{2}\{\frac{1}{n},\ldots, \frac{n}{n} \}$ with respect to the measure $\mu_{n}$ which can be viewed as an approximation of $\left([0,1], \lambda\right)$ where $\lambda$ is the Lebesgue measure on $[0,1]$. The vectors $f=(a_{1},\ldots, a_{n})$ in $\mathbb{R}^n$ we shall deal with will be realized as step functions on the interval $[0,1]$ where 
\begin{equation}
f(x)= a_{j} ~ \text{whenever } \frac{j}{n}\le x < \frac{j+1}{n}.
\end{equation}
We shall define the $L^2$ norm of $f$ with respect to $\mu_{n}$ to be 
\begin{equation}
||f||^2= \frac{1}{n} \sum_{i=1}^{n} a_{i}^2
\end{equation}
and for two functions $f$ and $g$ with $f=(a_{1},\ldots, a_{n})$ and $g=(b_{1},\ldots, b_{n})$ we define the inner product between $f$ and $g$ to be 
\[
\langle f,g \rangle = \frac{1}{n} \sum_{i=1}^{n} a_{i}b_{i}.
\]
Observe that due to the scaling $\frac{1}{n}$, the constant functions are of bounded norm. For example, one might consider the approximation of the indicator function on a fixed interval  $[a,b] \subseteq [0,1]$: 
\begin{equation}\label{eq:stepfunction}
f_{n}(x)= \left\{
\begin{array}{ll}
1 & \text{If } \frac{[an]}{n} \le x < \frac{[bn]}{n}\\
0 & \text{Otherwise}
\end{array}
\right.
\end{equation}
Then $||f_{n}||^2 = \left(1+O\left( \frac{1}{n} \right)\right)(b-a)$.

\noindent 
The main objective here is to incorporate random vectors like $f_{n}=(a_{1},\ldots, a_{n})$ such that $a_{i}$'s are i.i.d. standard Gaussian. We have discussed earlier that the limit of $f_{n}$ might not be a measurable function. However, $|| f_{n} ||^2=(1+o(1))$ almost surely. Next, we give a definition of convergence of $f_{n}$. 
\begin{definition}
For any step function $f_{n}$ which is constant on $\frac{j}{n}\le x < \frac{j+1}{n}$, we define 
\[
I_{f_{n},\frac{1}{2}}((a,b])= \sum_{j=[an]+1}^{[bn]} f_{n}\left( \frac{j}{n} \right)\left( \frac{1}{n}\right)^{\frac{1}{2}}. 
\]
\end{definition}
\begin{remark}
One might think $I_{f_{n},\frac{1}{2}}((a,b])$ to be a representative of the integral with respect to $\sqrt{dx}$ as we take $\left( \frac{1}{n} \right)^{\frac{1}{2}}$ in the definition. In fact, this interpretation is correct in the limit $n \to \infty$. However, as $f_{n}$'s are valid step functions, $\int_{0}^{1} f_{n}(x) (dx)^{\frac{1}{2}}$ is not well defined. This is because the integral $\int_{0}^{1} f_{n}(x) (dx)^{\frac{1}{2}}$ is defined when the mesh length $|| \mathcal{P} || \to 0$. However, for fixed $n$, $f_{n}$ is a step function where the limit is not defined when we take $||\mathcal{P}||$ much smaller than $\frac{1}{n}$. So, in order to define the limit, it is necessary that we take only meshes $\mathcal{P}_{n}$ to be $ 0< \frac{1}{n}< \ldots < \frac{n}{n}$ as $n \to \infty$. 
\end{remark}
\begin{definition}\label{def:con}
Let $\tilde{f}_{n}$ be a possibly random step-function which is constant on $\frac{j}{n}\le x < \frac{j+1}{n}$ and $g$ be a continuous function. Then, we say that $\tilde{f}_{n}$ is a consistent approximation of $\frac{dg(x)}{(dx)^{\frac{1}{2}}}$ if there is another step function $f_{n}$ constant on $\frac{j}{n}\le x < \frac{j+1}{n}$ such that the following conditions hold:
\[
\left( I_{f_{n},\frac{1}{2}}((a,b]) \right)_{a,b \in \mathbb{Q}\cap [0,1]} \stackrel{d}{\to} \left( g(b)-g(a) \right)_{a,b \in \mathbb{Q}\cap [0,1]}
\]
and 
\[
\sum_{j=1}^{n} \left( f_{n}\left( \frac{j}{n}\right) - \tilde{f}_{n} \left( \frac{j}{n} \right)\right)^{2} \left( \frac{1}{n} \right) \stackrel{p}{\to} 0.
\]
\end{definition}
In this paper, we shall require countably many $g_{i}$'s instead of a single $g$. The extension of Definition \ref{def:con} to the countable case is as follows:
\begin{definition}\label{def:conII}
Let $\{g_{i}\}_{i \in \mathbb{N}}$ be a countable sequence of possibly random continuous functions. As an example, one can think $g_{i}$ as a sequence of i.i.d. standard Brownian motions on $[0,1]$. For every $i$, let $\tilde{f}_{n,i}$ be a step function which is constant on $\frac{j}{n}\le x < \frac{j+1}{n}$. We say that $\left(\tilde{f}_{n,1},\ldots, \tilde{f}_{n,k},\ldots\right)$ is  jointly  consistent approximation of $\left(\frac{dg_{1}(x)}{(dx)^{\frac{1}{2}}},\ldots, \frac{dg_{k}(x)}{(dx)^{\frac{1}{2}}},\ldots,\right)$ if 
\[
\left( I_{f_{n,1},\frac{1}{2}}((a_{1},b_{1}]),\ldots, I_{f_{n,k},\frac{1}{2}}((a_{k},b_{k}]) \right)_{a_{1},b_{1},\ldots, a_{k},b_{k} \in \mathbb{Q}} \stackrel{d}{\to} \left( g_{1}(b_{1})-g_{1}(a_{1}),\ldots, g_{k}(b_{k})- g_{k}(a_{k})  \right)_{a_{1},b_{1},\ldots, a_{k},b_{k} \in \mathbb{Q}}
\]
and 
\[
\sum_{j=1}^{n} \left( f_{n,k}\left( \frac{j}{n}\right) - \tilde{f}_{n,k} \left( \frac{j}{n} \right)\right)^{2} \left( \frac{1}{n} \right) \stackrel{p}{\to} 0.
\]
for every fixed $k$.
\end{definition}
\begin{remark}\label{rem:boundedzero}
    In this case, one can also take examples following Remark \ref{rem:hausdorff} such that $||f_{n,k}||^{2} \to 0$ but $I_{f_{n,k},\frac{1}{2}}((a,b])$ is not well defined. We map these vectors to the $0$ vector. The kernel $K_{n}$ has operator norm bounded by $2$ almost surely. Hence, ignoring vectors with $L^{2}$ norm going to $0$ will not create any problem defining the limiting operator.
\end{remark}
Next, we state the following result which states that if one has the step function of the following form:
\begin{equation}\label{der:browsqrt}
f_{n}(x)= X_{n,i} ~ \text{whenever } \frac{i}{n}\le x < \frac{i+1}{n}
\end{equation}
with $X_{n,1},\ldots, X_{n,n}$ are i.i.d. with mean $0$ and variance $1$, then $f_{n}(x)$ is a consistent approximation of $\frac{dB}{\sqrt{dx}}$. This is a straightforward consequence of Donsker's theorem \cite{donsker1951invariance} which is stated below. 
\begin{theorem}
Let $X_{1},\ldots, X_{n},\ldots$ be a sequence of i.i.d. random variables with mean $0$ and variance $1$. Define the functions $W^{(n)}(t) \in C[0,1]$ as follows: If $t= \frac{i}{n}$ for some $i \in \{ 0,\ldots, n \}$, then $W^{(n)}(t)= \frac{\sum_{j=1}^{i}X_{j}}{\sqrt{n}}$. Otherwise, linearly interpolate between $W^{(n)}(\frac{i}{n})$ and $W^{(n)}(\frac{i+1}{n})$ to get the value of $W^{n}(t)$. Here $i$ is the unique integer such that $\frac{i}{n}\le t \le \frac{i+1}{n}$. Then $W^{(n)}(t)$ converges in distribution to a standard Brownian motion $B(t)$.
\end{theorem}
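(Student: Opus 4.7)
The plan is to use the classical two-step route to weak convergence in $C[0,1]$: first establish convergence of the finite-dimensional distributions, then prove tightness of the family $\{W^{(n)}\}$ in $C[0,1]$. Combined by Prokhorov's theorem these yield weak convergence of $W^{(n)}$ to the standard Brownian motion $B$.

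\textbf{Finite-dimensional convergence.} Fix $0 \le t_1 < t_2 < \cdots < t_k \le 1$. Write $W^{(n)}(t_j) = n^{-1/2}S_{\lfloor n t_j\rfloor} + R_{n,j}$, where $S_m = \sum_{i=1}^m X_i$ and $R_{n,j}$ is the linear interpolation remainder, which satisfies $|R_{n,j}| \le n^{-1/2}|X_{\lfloor nt_j\rfloor + 1}|$ and hence $R_{n,j} \convp 0$ by Chebyshev. The increments $n^{-1/2}(S_{\lfloor nt_j\rfloor} - S_{\lfloor nt_{j-1}\rfloor})$ are independent across $j$ (disjoint blocks of the i.i.d.\ $X_i$'s), each a normalized sum of $\approx n(t_j - t_{j-1})$ mean-zero, variance-one terms. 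The multivariate Lindeberg CLT plus Slutsky give $(W^{(n)}(t_1),\ldots,W^{(n)}(t_k)) \convd (B(t_1),\ldots,B(t_k))$, matching the independent-Gaussian-increments structure of Brownian motion.

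\textbf{Tightness.} By the standard characterization of tightness in $C[0,1]$, it suffices to show that for every $\varepsilon > 0$,
\[
\lim_{\delta \downarrow 0}\limsup_{n \to \infty} \bbP\!\left(\sup_{|s-t|<\delta}\bigl|W^{(n)}(s)-W^{(n)}(t)\bigr| > \varepsilon\right) = 0.
\]
Since $W^{(n)}$ is the linear interpolation of the partial sums, the supremum over $|s-t|<\delta$ is controlled (up to a factor of $3$) by $\max_{0 \le m \le n - \lfloor n\delta\rfloor}\max_{0\le k \le \lfloor n\delta\rfloor}|S_{m+k}-S_m|$ taken over $m$'s on a grid of spacing $\lfloor n\delta\rfloor$. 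Kolmogorov's maximal inequality gives, for each block,
\[
\bbP\!\left(\max_{0\le k \le \lfloor n\delta\rfloor}|S_{m+k}-S_m| > \varepsilon\sqrt{n}\right) \le \frac{\lfloor n\delta\rfloor}{\varepsilon^2 n}.
\]
A union bound over the $\lceil 1/\delta\rceil$ blocks yields a bound of order $\delta/\varepsilon^2$, which vanishes as $\delta \downarrow 0$, establishing tightness.

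\textbf{Main obstacle.} The delicate point is that only second moments are assumed, so the Chentsov-type moment criterion (which asks for a fourth moment on increments) does not apply directly; Kolmogorov's maximal inequality combined with blocking is the workaround, and care is needed to pass from a grid of block maxima to a genuine sup-over-$|s-t|<\delta$ bound. An alternative I would keep in reserve is the Skorohod embedding: realize $(S_k)_{k\ge 0}$ as $(\tilde B(\tau_k))_{k\ge 0}$ for a standard Brownian motion $\tilde B$ and stopping times $\tau_k$ with $\bbE[\tau_k - \tau_{k-1}] = 1$. The strong law of large numbers gives $\tau_{\lfloor nt\rfloor}/n \to t$ uniformly on $[0,1]$ almost surely, and uniform continuity of $\tilde B$ on compacts (L\'evy modulus) then yields $n^{-1/2}\tilde B(n\tau_{\lfloor nt\rfloor}) \to \tilde B(t)$ uniformly almost surely, delivering convergence in $C[0,1]$ without any tightness-by-moment argument.
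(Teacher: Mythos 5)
The paper does not prove this theorem at all: it states Donsker's theorem as a classical result and uses it as a black box, so there is no ``paper proof'' to compare against. Your proposal therefore needs to be judged on its own merits, and the tightness step has a genuine gap.

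Your finite-dimensional-distributions argument is fine. The problem is the tightness bound. Kolmogorov's maximal inequality does give, per block,
\[
\bbP\left(\max_{0\le k\le \lfloor n\delta\rfloor}\bigl|S_{m+k}-S_m\bigr|>\varepsilon\sqrt{n}\right)\le\frac{\lfloor n\delta\rfloor}{\varepsilon^2 n}\le\frac{\delta}{\varepsilon^2},
\]
but a union bound over the $\lceil 1/\delta\rceil$ grid blocks then yields $\lceil 1/\delta\rceil\cdot(\delta/\varepsilon^2)\asymp 1/\varepsilon^2$, \emph{not} $\delta/\varepsilon^2$ as you claim. The block count exactly cancels the gain from the small block length, so the quantity does not vanish as $\delta\downarrow 0$. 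This is precisely why the textbook proof (Billingsley, \emph{Convergence of Probability Measures}) does not use Kolmogorov's maximal inequality at this point; it uses an Ottaviani/Etemadi-type inequality, which controls $\bbP(\max_{k\le m}|S_k|>2\alpha)$ by a constant times the \emph{endpoint tail} $\bbP(|S_m|>\alpha)$. After that reduction one invokes the CLT for the endpoint to get a Gaussian tail $\Phi(-\varepsilon/(2\sqrt{\delta}))\le e^{-\varepsilon^2/(8\delta)}$, which decays faster than any power of $\delta$; the union bound $\delta^{-1}e^{-\varepsilon^2/(8\delta)}\to 0$ then goes through. Without replacing Kolmogorov's variance bound by an endpoint-tail bound, the blocking argument simply does not close.

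Your fallback via Skorohod embedding is the right workaround and is the standard clean route when only second moments are assumed; it is only sketched here, and one should be careful that the scaling $\tilde B_n(s):=n^{-1/2}\tilde B(ns)$ produces a \emph{different} Brownian motion for each $n$, so the conclusion is convergence in distribution (or, more precisely, a coupling with $d(W^{(n)},\tilde B_n)\to 0$ in probability), not a.s.\ convergence to a single fixed $B$. Either fix the maximal-inequality step with Ottaviani/Etemadi, or carry the Skorohod route out in full.
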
 
\subsection{Results on the limit of $K_{n}$ }
The limiting operator can be defined in terms of countable infinitely many independent Brownian sheets. First, we define the Brownian sheets.
\begin{definition}(Brownian sheet)
A Brownian sheet $B(x,y)$ is a Gaussian process from $\mathbb{R}^2 \to \mathbb{R}$ where $\Cov[B(x_{1},y_{1}), B(x_{2},y_{2})]= \min\{ x_{1}, x_{2} \}\times \min\{y_{1},y_{2}\}$. We shall consider the restriction of the Brownian sheets on $[0,1] \times [0,1]$ only.
\end{definition}
In this paper, we shall require the following properties of the Brownian sheet repeatedly. 
\begin{theorem}
    Let $B(x,y)$ be a standard Brownian sheet. Then for any fixed $a<b \in \mathbb{R}$, the function $B(x,b)-B(x,a)$ as a function of $x$ is distributed as $(b-a)^{\frac{1}{2}}B(x)$ where $B(x)$ is a standard Brownian motion.
\end{theorem}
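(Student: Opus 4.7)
The plan is to exploit that both $x \mapsto B(x,b)-B(x,a)$ and $x \mapsto (b-a)^{1/2}B(x)$ are centered continuous Gaussian processes on $[0,1]$, so equality in distribution reduces to checking that the two processes have the same covariance kernel, plus a routine continuity argument to upgrade finite-dimensional equality to equality in law at the process level.

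First I would set $Z(x) := B(x,b)-B(x,a)$. Because $B$ is a jointly Gaussian field, any finite linear combination $\sum_i c_i Z(x_i) = \sum_i c_i B(x_i,b) - \sum_i c_i B(x_i,a)$ is a linear functional of the sheet and hence Gaussian; together with $\mathbb{E}[Z(x)] = 0$ this shows $Z$ is a centered Gaussian process. The main computation is the covariance: by bilinearity,
\begin{align*}
\Cov[Z(x_1), Z(x_2)] &= \Cov[B(x_1,b), B(x_2,b)] - \Cov[B(x_1,b), B(x_2,a)] \\
&\quad - \Cov[B(x_1,a), B(x_2,b)] + \Cov[B(x_1,a), B(x_2,a)].
\end{align*}
Using the defining formula $\Cov[B(u_1,v_1), B(u_2,v_2)] = \min(u_1,u_2)\min(v_1,v_2)$ together with $0 \le a < b$, every $\min$ over an $a$-$b$ pair collapses to $a$ and the four terms simplify to $\min(x_1,x_2)(b-a-a+a) = \min(x_1,x_2)(b-a)$.

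On the other side, $(b-a)^{1/2}B(\cdot)$ is a centered Gaussian process with covariance $(b-a)\min(x_1,x_2)$, which matches exactly. Hence the two processes have identical finite-dimensional distributions. Since the Brownian sheet admits an almost-surely continuous version, $Z$ inherits continuous sample paths, and equality of finite-dimensional distributions for continuous Gaussian processes promotes to equality in law as random elements of $C[0,1]$. I do not foresee any genuine obstacle here; the only mildly delicate point is invoking continuous modifications on both sides so that the phrase ``distributed as'' in the theorem can be read at the process level rather than merely for finite collections of marginals.
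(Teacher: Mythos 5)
Your proof is correct. The paper itself states this theorem without proof, treating it as a standard fact about Brownian sheets, so there is no paper proof to compare against; the covariance-kernel computation you give (centered Gaussian process, bilinearity of covariance, $\min(b,b)-\min(b,a)-\min(a,b)+\min(a,a)=b-a$ for $0\le a<b$, then identification of finite-dimensional laws and upgrade to process-level equality via continuity) is exactly the standard argument one would supply, and it is complete.
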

Using Definition \ref{def:dersetalpha},  we define the object $\frac{d(B(x,b)-B(x,a))}{\sqrt{dx}}$.
Recall that for the kernel $K_{n}\left( \cdot, \cdot \right)$ we have the constraint that $K_{n}\left(\frac{i}{n}, \frac{j}{n}\right)= K_{n}\left(\frac{j}{n}, \frac{i}{n}\right)$. In order to work with the constraint, we shall work with the symmetrized version of the Brownian sheet. In particular, we work with two Brownian sheets $B(x,y)$ and $B'(x,y)$ such that $B(x,y)= B'(y,x)$.

The following theorem proves a result about the realization of $K_{n}\left(\mathbb{I}_{(a,b]}\right)$
\begin{theorem}
Let $K_{n}(\cdot, \cdot)$ be as defined in \eqref{def:kernel}, $B(x,y)$ and $B'(x,y)$ be Brownian sheets with $B'(x,y)=B(y,x)$. Define $\Gamma(x,y)= \frac{1}{\sqrt{2}}\left( B(x,y) + B'(x,y) \right)$. Then for any interval $(a,b]$, $K_{n}(I_{(a,b]})$ is a consistent approximation (recalling Definition \ref{def:con}) of $\frac{d\left(\Gamma(x,b)-\Gamma(x,a)\right)}{\sqrt{dx}}$.
\end{theorem}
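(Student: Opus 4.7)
The plan is to verify both conditions of Definition~\ref{def:con} for $\tilde f_n := K_n(\mathbb I_{(a,b]})$ with target $g(x) := \Gamma(x,b) - \Gamma(x,a)$. For the remainder condition I simply choose $f_n := \tilde f_n$, which makes it vanish identically. All substantive work therefore lies in the joint convergence in distribution of $\bigl(I_{\tilde f_n, 1/2}((c,d])\bigr)_{c,d \in \mathbb Q \cap [0,1]}$ to $\bigl(g(d) - g(c)\bigr)_{c,d \in \mathbb Q \cap [0,1]}$; since this is a countable family, it suffices (by the product topology on $\mathbb R^{\mathbb Q^2}$) to check convergence of arbitrary finite marginals.

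First I unwind the kernel action. By Definition~\ref{def:kernel}, $\tilde f_n(i/n) = n^{-1/2} \sum_{j=[an]+1}^{[bn]} x_{i,j}$, and therefore
\[
I_{\tilde f_n, 1/2}((c,d]) \;=\; \sum_{i=[cn]+1}^{[dn]} \tilde f_n(i/n) \cdot n^{-1/2} \;=\; \frac{1}{n} \sum_{i=[cn]+1}^{[dn]}\sum_{j=[an]+1}^{[bn]} x_{i,j} \;=:\; \tfrac{1}{n} S_n(R),
\]
with $R = (c,d]\times(a,b]$. For a finite collection of rational rectangles $R_1,\ldots,R_k$ (all sharing second factor $(a,b]$), each $S_n(R_l)$ expands as a linear combination of the independent upper-triangular entries $\{x_{i,j}\}_{i \le j}$; the Hermitian symmetry $x_{i,j} = \bar x_{j,i}$ only forces the coefficients into $\{0,1,2\}$ (the value $2$ arising whenever an index pair sits in both $R_l$ and $R_l^T$). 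A multivariate Lindeberg CLT applied to this array of $O(n^2)$ summands with uniformly bounded coefficients then yields joint asymptotic Gaussianity of $(S_n(R_l)/n)_{l=1}^k$.

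Matching covariances identifies the limit. On the Wigner side, the only nontrivial contributions are $(i_1,j_1)=(i_2,j_2)$ and $(i_1,j_1)=(j_2,i_2)$, which gives
\[
\Cov\!\bigl(\tfrac{1}{n} S_n(R_l),\, \tfrac{1}{n} S_n(R_m)\bigr) \;\longrightarrow\; |R_l \cap R_m| + |R_l^T \cap R_m|.
\]
On the $\Gamma$ side, $B'(x,y) = B(y,x)$ implies that the $B'$-increment over any rectangle $R$ equals the $B$-increment over $R^T$, so the $\Gamma$-increment over $R$ equals $\tfrac{1}{\sqrt 2}(B_\Delta(R) + B_\Delta(R^T))$, where $B_\Delta$ denotes Brownian-sheet rectangle increments. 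A direct calculation then produces exactly the same covariance, and together with joint Gaussianity this pins down the joint law.

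The main obstacle I anticipate is the combinatorics near the diagonal: when some interval $(c_l,d_l]$ overlaps $(a,b]$, the rectangle $R_l$ meets both the diagonal and its own transpose, so the Wigner-side counting must avoid double-counting the on-diagonal pairs $(i,i)$, absorb any distinct variance of diagonal entries (they contribute $O(1/n)$ and are asymptotically negligible), and verify that discrepancies between $[xn]$ and $xn$ contribute only $O(1/n)$. Once this bookkeeping is carried out, both the Lindeberg verification and the covariance match are routine.
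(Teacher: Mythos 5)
Your proposal is correct, and it takes a genuinely different and more elementary route than the paper. The paper never proves this theorem on its own; it is the $l=1$ case of Theorem~\ref{thm:joinopconv}, whose proof proceeds by the moment method: decompose $K_n^l(\mathbb I_{P_i})$ into closed-word, mean, and centred non-backtracking pieces, then run Wick-type combinatorics (Lemma~\ref{lem:clt}, following Anderson--Zeitouni) on sentences of words to extract joint Gaussianity and covariances. Your approach for $l=1$ bypasses all of that word/sentence machinery: you observe that
\[
I_{\tilde f_n,\,1/2}\bigl((c,d]\bigr)=\frac1n\sum_{i\in(cn,dn]}\sum_{j\in(an,bn]}x_{i,j}
\]
is already a \emph{single-layer} linear statistic in the independent upper-triangular entries, apply the multivariate Lindeberg CLT directly, and match covariances against the symmetrised Brownian-sheet increment. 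All the ingredients check out: the coefficient bookkeeping (values in $\{0,1,2\}$ from the symmetry $x_{i,j}=\bar x_{j,i}$) is right; taking $f_n=\tilde f_n$ trivially kills the remainder clause of Definition~\ref{def:con}; the covariance $|R_l\cap R_m|+|R_l^T\cap R_m|$ on the Wigner side matches $\mathrm{Cov}\bigl(\Gamma_\Delta(R_l),\Gamma_\Delta(R_m)\bigr)=\tfrac12\bigl(|R_l\cap R_m|+|R_l\cap R_m^T|+|R_l^T\cap R_m|+|R_l^T\cap R_m^T|\bigr)$ after using $|R^T\cap S^T|=|R\cap S|$; diagonal terms and $[xn]$ round-offs contribute $O(1/n)$; and joint convergence of the full countable family reduces correctly to finite marginals. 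The trade-off is instructive: your route needs only second moments and is shorter, whereas the paper's moment-method scaffolding is built to handle all powers $K_n^l$ in one framework (and consequently imposes the stronger moment bounds of Theorem~\ref{Thm:wigcon}); for $l=1$ the combinatorics collapse and the direct CLT is plainly cleaner.
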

More generally, 
\begin{theorem}\label{thm:joinopconv}
Let $B_{m}(x,y)$ be a sequence of independent Brownian sheets and let $B_{m}'(x,y)=B_{m}(x,y)$. Define $C_{l}(x,y)= \sum_{m=1~|~ l-m ~ \text{even}}^{l}\frac{m+1}{l+1}\binom{l+1}{\frac{l-m}{2}}B_{m}(x,y)$ and $C_{l}'(x,y)= C_{l}(y,x)$ with $$\Gamma_{l}(x,y)= \frac{1}{\sqrt{2}} \left( C_{l}(x,y)+ C_{l}'(x,y) \right).$$ Also, let $P_{i}$ be an enumeration of the intervals $(a,b]$ with $0\le a<b \le 1$ and $a,b \in \mathbb{Q}$. Then for any fixed $l$, 
$K_{n}^{l} \left(\mathbb{I}_{P_{i}}\right)= \beta_{l} \mathbb{I}_{P_{i}} + \psi_{n,l,i}$ where $\psi_{n,l,i}$'s are jointly consistent approximations of $\left( \frac{d\left( \Gamma_{l}(x,b_{i}) -\Gamma_{l}(x,a_{i})\right)}{\sqrt{dx}}\right)_{i \in \mathbb{N}, l \in \mathbb{N}} $. Here, $\beta_{l}$ is the $l$ th moment of the semicircular law.  
\end{theorem}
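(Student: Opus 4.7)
My plan is to implement a Füredi--Komlós-style walk enumeration adapted to the kernel setup. Unfolding the iterated kernel and using $K_n(i/n, j/n) = \sqrt{n}\, x_{i,j}$,
\begin{equation*}
K_n^l(\mathbb{I}_{P_i})(i_0/n) \;=\; \sum_{i_l \,:\, i_l/n \in P_i} (W_n^l)_{i_0, i_l} \;=\; \frac{1}{n^{l/2}}\sum_{\substack{i_1, \ldots, i_l\\ i_l/n \in P_i}} x_{i_0, i_1} x_{i_1, i_2} \cdots x_{i_{l-1}, i_l}.
\end{equation*}
I would classify each walk $(i_0, i_1, \ldots, i_l)$ by the isomorphism type of its edge multigraph. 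Only walks whose multigraph is a tree with every edge of multiplicity exactly two survive in the leading scaling (others are controlled by the standard vertex-counting bound $n^{v-1}$). Among these, the closed ones ($i_0 = i_l$) enumerate as $C_{l/2}\, n^{l/2}(1 + O(1/n))$ when $l$ is even and as $0$ when $l$ is odd, producing the main term $\beta_l\, \mathbb{I}_{P_i}(i_0/n)$ exactly as in Wigner's original moment count. The remaining walks (closed ones with extra edges, non-closed ones, and the lower-order correction from closed trees) are exactly what is packaged into $\psi_{n, l, i}$.

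The next step is to verify consistent approximation in the sense of Definition \ref{def:conII}. Since $\psi_{n, l, i}$ is already a step function on $\{j/n\}$, I take $f_{n, l, i} = \psi_{n, l, i}$ (so the $L^2$ closeness condition is trivial) and must show that
\begin{equation*}
I_{\psi_{n, l, i}, 1/2}((a,b]) \;=\; \frac{1}{\sqrt{n}}\sum_{\substack{i_0 \in n(a,b] \\ i_l \in nP_i}} (W_n^l)_{i_0, i_l} \;-\; \beta_l \,\frac{|n((a,b] \cap P_i)|}{\sqrt{n}}
\end{equation*}
converges jointly in distribution as $a, b$ range over $\mathbb{Q} \cap [0,1]$ and $(l, i)$ over any finite sub-collection. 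By the Gaussian Wick moment expansion (the non-Gaussian case is handled by the usual edge-count bound), the only pairs of walks whose joint edge multigraph is a doubled tree contribute to the covariances. I would parametrise each non-closed walk of length $l$ from $i_0$ to $i_l$ by the length $m$ of the unique geodesic between the two endpoints in its spanning tree (forcing $l-m$ even) and by the planar arrangement of the $(l-m)/2$ excursion edges. A bijection with $\pm 1$ lattice paths of length $l$ from $0$ to $m$ staying in $\mathbb{N}$ shows that the number of such shapes is exactly the ballot count $\alpha_{l, m} := \tfrac{m+1}{l+1}\binom{l+1}{(l-m)/2}$.

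Plugging this enumeration in, the limiting covariance of $I_{\psi_{n, l_1, i_1}, 1/2}(A_1)$ and $I_{\psi_{n, l_2, i_2}, 1/2}(A_2)$ takes the form
\begin{equation*}
\sum_m \alpha_{l_1, m}\, \alpha_{l_2, m}\Bigl(\, |A_1 \cap A_2|\, |P_{i_1} \cap P_{i_2}| \;+\; |A_1 \cap P_{i_2}|\, |P_{i_1} \cap A_2|\,\Bigr),
\end{equation*}
where the first summand comes from paired walks that traverse every shared edge in the same orientation and the second from pairs that reverse orientations (this is precisely where the symmetry $x_{ij} = x_{ji}$, and hence the $C_l + C_l'$ structure of $\Gamma_l$, enters). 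This is exactly the covariance of $\Gamma_{l_1}(A_1 \times P_{i_1})$ with $\Gamma_{l_2}(A_2 \times P_{i_2})$ under $\Gamma_l = \tfrac{1}{\sqrt{2}}(C_l + C_l')$ and $C_l = \sum_m \alpha_{l, m} B_m$. Asymptotic Gaussianity of every joint law follows by the usual moment-method CLT for Wigner matrices: higher joint cumulants correspond to connected joint multigraphs that are not trees, and such walks carry fewer distinct vertices than the normalisation $n^{-(l+1)/2}$ can accommodate, so they vanish in the limit.

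The main obstacle is the combinatorial identification in the previous paragraph: proving that the walks whose spanning tree has endpoint-distance $m$ are enumerated, once the vertex labels are factored out, by $\alpha_{l, m}$, and that under the pair-summation the two orientation classes of shared edges give rise separately to the $C_l$- and $C_l'$-contributions. This is the Wigner analogue of the classical Dyck-path bijection with planar trees, augmented by the endpoint-height parameter; executing it cleanly for all $l$ simultaneously, and tracking the flip symmetry responsibly, is the technically delicate heart of the proof. Joint convergence over the full countable family $\{(l, i)\}$ then follows from tightness of each finite subfamily via Kolmogorov's extension, which is implicit in Definition \ref{def:conII}.
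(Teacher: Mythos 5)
Your proposal follows essentially the same route as the paper: a Füredi--Komlós walk enumeration, classification of the doubled-tree shapes of open walks by the length $m$ of the unique geodesic (what the paper calls the ``stem'') between the two endpoints, identification of the shape count with the ballot numbers $\alpha_{l,m}=\tfrac{m+1}{l+1}\binom{l+1}{(l-m)/2}$, and a Wick-type moment-method CLT using the observation that non-tree joint multigraphs carry too few vertices. The covariance formula you write down is correct and matches the $\Gamma_l=\tfrac{1}{\sqrt 2}(C_l+C_l')$ structure; the split into ``same-orientation'' and ``reversed-orientation'' pair contributions is exactly the symmetrization step the paper performs with the auxiliary kernels $T^{(1)}_{n,l}$, $T^{(2)}_{n,l}$ in Lemma~\ref{lem:nonback}. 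The only organizational difference is that the paper introduces the intermediate object $B_{n,m}(\mathbb{I}_{P_i})$ built from signed non-backtracking paths $\xi^m_{i,j,n}$ and proves its convergence separately, then approximates the centered open-walk sum by a linear combination of these; you propose to compute the covariances of the walk sums directly, which is a legitimate reorganization.

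However, there is a genuine gap in the step where you ``take $f_{n,l,i}=\psi_{n,l,i}$ so the $L^2$ closeness condition is trivial'' and then verify distributional convergence of $I_{\psi_{n,l,i},1/2}$. This does not work for general Wigner entries. Write $\psi_{n,l,i}=(V_1-\beta_l\mathbb{I}_{P_i})+V_2+V_3$ as in the paper, where $V_1$ is the closed-walk part, $V_2$ the centered open-walk part, and $V_3=\mathbb{E}[\text{open part}]$. The deterministic term $V_3(\tfrac{i_0}{n})$ is supported on walks whose multigraph covers every edge at least twice but is not Eulerian (one edge of multiplicity $\ge 3$); when $l$ is odd and $\mathbb{E}[x^3]\neq 0$ it is genuinely of order $n^{-1/2}$. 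Hence
\[
I_{V_3,1/2}\bigl((c,d]\bigr)=\frac{1}{\sqrt n}\sum_{i_0\in n(c,d]}V_3\bigl(\tfrac{i_0}{n}\bigr)=\Theta(1),
\]
so $I_{\psi_{n,l,i},1/2}$ does not converge to the mean-zero Brownian-sheet increment you target; it acquires a nonvanishing deterministic shift. The fix, and what the paper actually does, is to put $V_3$ (and the small fluctuation of $V_1$ about $\beta_l\mathbb{I}_{P_i}$) into the $L^2$-small remainder in Definition~\ref{def:conII}: one checks $\|V_1-\beta_l\mathbb{I}_{P_i}\|^2\to 0$ and $\|V_3\|^2\to 0$ separately, and takes $f_{n,l,i}$ to be (a suitable modification of) the centered part $V_2$, whose $I$-integral is mean zero and amenable to the Wick computation. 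You should not identify $f_n$ with $\tilde f_n$; the two-function formulation of the definition is there precisely to absorb this obstruction.
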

This Theorem is perhaps the most important step in proving the convergence of the Wigner matrices. We shall provide a proof of this in the next section.
\subsection{Construction of the Hilbert space and the limiting operator}
We shall consider the elements of the Hilbert space on which the limiting operator is defined.
Instead of considering measurable functions, this Hilbert space contains set functions on half-open intervals.
We first give some elementary properties of the Hilbert space: 
\begin{enumerate}
\item Firstly, we incorporate any Lebesgue measurable and square-integrable function. Let $f$ be such a function. Then we write $f((a,b])= \frac{F(b)-F(a)}{b-a}$ where $F(b)-F(a)= \int_{a}^{b} f(x) dx$.
\item Next, we take a generic Brownian motion $B(x)$ and consider $\frac{dB(x)}{\sqrt{dx}}$. Here, for any $a<b$, we write $\frac{dB(x)}{\sqrt{dx}}((a,b])= \frac{B(b)-B(a)}{\sqrt{b-a}}$.
\item For any two set functions $f$ and $g$ and constants $c_{1}, c_{2} \in \mathbb{R}$, we define $(c_{1}f+c_{2}g)((a,b])= c_{1}f((a,b])+c_{2}g((a,b])$.
\item Finally for any two set functions $f$ and $g$, we define the inner product 
\begin{equation}\label{eq:innerproduct}
\langle f,g \rangle = \lim_{||\mathcal{P}|| \to 0} \sum_{i=1}^{l} f\left((a_{i-1},a_{i}]\right)g\left((a_{i-1},a_{i}]\right)(a_{i}-a_{i-1}).
\end{equation}
 Here, $\mathcal{P}= \{ 0=a_{0}<a_{1}<\ldots< a_{n}=1  \}$ and $||\mathcal{P}||= \sup_{i}\{ (a_{i}-a_{i-1}) \} $.
\end{enumerate}
\begin{remark}\label{rem:limitspace}
    We have seen in Remark \ref{rem:intequi} that several different set functions $G$ can be equivalent $\frac{dF}{\sqrt{dx}}$ yet $\int_{a}^{b} G(x)\sqrt{dx}$ might not be defined. As we are working with bounded operators, we are at liberty to consider only one $\mathbf{0}$ vector. This is the set function where $\mathbf{0}((a,b])=0$ for all $(a,b]$. Hence, we take the unique candidate in each equivalence class and $\frac{dB(x)}{\sqrt{dx}}((a,b])= \frac{B(b)-B(a)}{\sqrt{b-a}}$. In the vector space consider, we shall get $\int_{a}^{b} \frac{dB(x)}{\sqrt{dx}} \sqrt{dx}= B(b)-B(a)$. 
\end{remark}
\begin{remark}
The Hilbert space of interest will be defined \emph{explicitly} in terms of independent Brownian sheets and the concepts of derivatives introduced earlier in the paper. The objects in this Hilbert space will be random. In particular, for any $f$, $f((a,b])$ will be a random variable. Hence, the convergence in \eqref{eq:innerproduct} shall be in probability convergence. In addition, for the space with which we are working, the inner product will be well defined for any $f$ and $g$. 
\end{remark}
Now, we give an explicit definition of the Hilbert space and the limiting operator:
\begin{definition}\label{def:limspace}
Let $\Gamma_{l}(x,y)$ be as defined in Theorem \ref{thm:joinopconv}. Then the Hilbert space $$\mathcal{H} = \overline{\mathrm{Sp}}\left\{ \left(\mathbb{I}_{P_{i}}\right)_{i \in \mathbb{N}}, \left( \frac{d\left( \Gamma_{l}(x,b_{i}) -\Gamma_{l}(x,a_{i})\right)}{\sqrt{dx}}\right)_{i \in \mathbb{N}, l \in \mathbb{N}}   \right\}.$$ Here, $\overline{\mathrm{Sp}}$ denotes the closed span of the vectors. 
\end{definition}
\begin{definition}\label{def:limop}
Let the linear operator $\mathcal{W}: \mathcal{H} \to \mathcal{H}$ be defined in the following way:
\begin{equation}\label{eq:operatordefinition}
\begin{split}
\mathcal{W}(\mathbb{I}_{P_{i}}) &= \psi_{1,i}\\
\mathcal{W}\left(\psi_{k,i}\right) &= \beta_{k+1} \mathbb{I}_{P_{i}}+ \psi_{k+1,i} - \beta_{k}\psi_{1,i}.
\end{split}
\end{equation} 
Here,
$\psi_{k,i}:= \frac{d\left( \Gamma_{k}(x,b_{i}) -\Gamma_{k}(x,a_{i})\right)}{\sqrt{dx}}$.
\end{definition}
Now, we give the definition of the operator convergence. 
\begin{definition}\label{def:opconv}
    Let $K_{n}$ be an operator on $L^{2}\left\{ \frac{1}{n},\ldots, \frac{n}{n} \right\}$ with respect to the measure $\mu_{n}$ such that $||K_{n}||_{op}$ is bounded with high probability. Then, we say $K_{n} \rightsquigarrow \mathcal{W}$ if $K_{n}^{l}\left( \mathbb{I}_{P_{i}} \right)=\beta_{l}\mathbb{I}_{P_{i}}+ \psi_{n,l,i}+ \varepsilon_{n,l,i}$ where $||\varepsilon_{n,l,i}||^2 \stackrel{p}{\to} 0$, $\psi_{n,l,i}$'s are jointly consistent approximations of  $\left( \frac{d\left( \Gamma_{l}(x,b_{i}) -\Gamma_{l}(x,a_{i})\right)}{\sqrt{dx}}\right)_{i \in \mathbb{N}, l \in \mathbb{N}}$ and $\langle \psi_{n,l_{1},i_{1}}, \psi_{n,l_{2},i_{2}}  \rangle \stackrel{p}{\to} \left\langle \frac{d\left( \Gamma_{l_{1}}(x,b_{i_{1}}) -\Gamma_{l}(x,a_{i_{1}})\right)}{\sqrt{dx}}, \frac{d\left( \Gamma_{l_{2}}(x,b_{i_{2}}) -\Gamma_{l}(x,a_{i_{2}})\right)}{\sqrt{dx}} \right\rangle$ for every fixed $l_{1},l_{2}, i_{1}, i_{2}$.
\end{definition}
\section{Convergence of Wigner matrices}
In this section, we state the formal results about the convergence of Wigner matrices. 
\begin{theorem}\label{Thm:wigcon}
    Let $W_{n}$ be Wigner matrices of dimension $n \times n$ with $\mathrm{E}[|X_{i,j}|^{d}]\le (C_{1}d)^{C_{2}d}$ for all $d \in \mathbb{N}$ for some fixed $C_{1}$ and $C_{2}$. Consider the kernel $K_{n}$ as defined in Definition \ref{def:kernel}. Then $K_{n} \rightsquigarrow \mathcal{W}$ as $n \to \infty$.
\end{theorem}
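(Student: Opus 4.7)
The plan is to reduce Theorem~\ref{Thm:wigcon} to Theorem~\ref{thm:joinopconv} (whose proof occupies the next section of the paper) plus a short inner-product computation. Definition~\ref{def:opconv} demands three things of the convergence $K_n \rightsquigarrow \mathcal{W}$: (i) the decomposition $K_n^l(\mathbb{I}_{P_i}) = \beta_l\mathbb{I}_{P_i} + \psi_{n,l,i} + \varepsilon_{n,l,i}$ with $\|\varepsilon_{n,l,i}\|^2 \convp 0$; (ii) joint consistent approximation of $\{\psi_{n,l,i}\}$ to the limiting family $\psi_{l,i}:=\frac{d(\Gamma_l(x,b_i)-\Gamma_l(x,a_i))}{\sqrt{dx}}$; and (iii) in-probability convergence $\langle \psi_{n,l_1,i_1},\psi_{n,l_2,i_2}\rangle \convp \langle \psi_{l_1,i_1},\psi_{l_2,i_2}\rangle$. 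Items (i) and (ii) are delivered verbatim by Theorem~\ref{thm:joinopconv} with $\varepsilon_{n,l,i}\equiv 0$, so the only remaining task is to verify (iii).

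For (iii), the first step is to exploit the self-adjointness of $K_n$ on $L^{2}(\mu_n)$, which is immediate from the Hermitian symmetry $K_n(i/n,j/n)=\overline{K_n(j/n,i/n)}$. Writing $\psi_{n,l,i}=K_n^l(\mathbb{I}_{P_i})-\beta_l\mathbb{I}_{P_i}$ and applying $\langle K_n^{l_1}f,K_n^{l_2}g\rangle=\langle f,K_n^{l_1+l_2}g\rangle$, together with the decomposition from Theorem~\ref{thm:joinopconv}, gives
\[
\langle \psi_{n,l_1,i_1},\psi_{n,l_2,i_2}\rangle
= (\beta_{l_1+l_2}-\beta_{l_1}\beta_{l_2})\langle \mathbb{I}_{P_{i_1}},\mathbb{I}_{P_{i_2}}\rangle + R_n,
\]
where $R_n$ is a finite sum of pairings of the form $\langle \mathbb{I}_{P_j},\psi_{n,l,i}\rangle$. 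Each such pairing has mean zero, since the mean-$\beta_l\mathbb{I}_{P_i}$ contribution has already been subtracted in defining $\psi_{n,l,i}$. I would then estimate its variance via a \FK-type walk enumeration: expanding two copies of $\langle \mathbb{I}_{P_j},\psi_{n,l,i}\rangle$ and taking expectation pairs up the $x_{a,b}$'s, with only length-$2l$ closed walks contributing. The walks whose edges are traversed exactly twice give cancelling contributions of order $n^{-1}$ because of the $\mathbb{I}_{P_j}$ averaging (the constant part is removed by subtracting $\beta_l\mathbb{I}_{P_i}$), while walks of higher edge multiplicity are controlled by the moment hypothesis $\mathrm{E}[|x_{i,j}|^d]\le(C_1d)^{C_2d}$. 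This yields $\mathrm{Var}(R_n)=O(1/n)$, and Chebyshev gives $R_n\convp 0$.

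Finally one must identify the constant $(\beta_{l_1+l_2}-\beta_{l_1}\beta_{l_2})\langle \mathbb{I}_{P_{i_1}},\mathbb{I}_{P_{i_2}}\rangle$ with $\langle \psi_{l_1,i_1},\psi_{l_2,i_2}\rangle$ on the continuous side. Using the definition $C_l(x,y)=\sum_{m}\frac{m+1}{l+1}\binom{l+1}{(l-m)/2}B_m(x,y)$ and the symmetrization $\Gamma_l=\frac{1}{\sqrt{2}}(C_l+C_l')$, the covariance of $\Gamma_{l_1}(x,b_{i_1})-\Gamma_{l_1}(x,a_{i_1})$ and $\Gamma_{l_2}(x,b_{i_2})-\Gamma_{l_2}(x,a_{i_2})$ viewed as Brownian motions in $x$ can be computed directly from the sheet covariance $\min(x_1,x_2)\min(y_1,y_2)$; applying the quadratic-variation identity $\int_0^1(\frac{dB(x)}{\sqrt{dx}})^2\,dx=1$ proved earlier then yields the desired $\beta$-dependent constant. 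The binomial weights in $C_l$ are chosen precisely to count length-$l$ Wigner walks with a given tree-skeleton, and the matching is essentially the Dyck-path identity underlying the Catalan numbers.

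The main obstacle is combinatorial inheritance from the next section: virtually every step above relies on the \FK{} walk expansion used to prove Theorem~\ref{thm:joinopconv}, and on the fact that the weights in $C_l$ are correctly tuned to Wigner-walk enumeration. In particular, controlling $\mathrm{Var}(\langle \mathbb{I}_{P_j},\psi_{n,l,i}\rangle)$ uniformly enough to upgrade expectation estimates to in-probability convergence requires tracking length-$2l$ closed walks with overlapping intervals $P_j,P_i$; and matching the tree-skeleton count with the binomial weights in $C_l$ is where the bulk of the algebraic bookkeeping lies. Once Theorem~\ref{thm:joinopconv} is in hand, the additional work for Theorem~\ref{Thm:wigcon} is confined to the second-moment estimate for $R_n$ and the Gaussian covariance identification, both of which draw on machinery already deployed to prove Theorem~\ref{thm:joinopconv}.
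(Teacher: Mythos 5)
The paper actually \emph{omits} the proof of Theorem~\ref{Thm:wigcon}, stating only that it ``is a straightforward application of the arguments in the proof of Theorem~\ref{thm:joinopconv} and Lemma~\ref{lem:clt}''; that is, the intended route is to re-run the \FK{} word-enumeration directly on $\langle \psi_{n,l_1,i_1},\psi_{n,l_2,i_2}\rangle = \tfrac{1}{n}\sum_i \psi_{n,l_1,i_1}(\tfrac{i}{n})\psi_{n,l_2,i_2}(\tfrac{i}{n})$. Your proposal is correct but takes a genuinely cleaner route for condition (iii) of Definition~\ref{def:opconv}: you first invoke self-adjointness of $K_n$ on $L^2(\mu_n)$ to write $\langle K_n^{l_1}f, K_n^{l_2}g\rangle = \langle f, K_n^{l_1+l_2}g\rangle$, which collapses the quadratic expression into a short linear combination of scalars $\langle \mathbb{I}_{P_{i_1}}, K_n^{l}\mathbb{I}_{P_{i_2}}\rangle$ with a \emph{single} power of $K_n$ each; Theorem~\ref{thm:joinopconv} then hands you the deterministic part $(\beta_{l_1+l_2}-\beta_{l_1}\beta_{l_2})\langle\mathbb{I}_{P_{i_1}},\mathbb{I}_{P_{i_2}}\rangle$ immediately. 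This reduces the new combinatorial work essentially to zero, whereas the paper's implicit route would redo a full two-word sentence enumeration. Two small remarks. First, the remainder $R_n$ can be dispatched without the second-moment/\FK{} estimate you sketch: each term is $\langle \mathbb{I}_{P_j},\psi_{n,l,i}\rangle = n^{-1/2}\,I_{\psi_{n,l,i},\frac{1}{2}}(P_j)$, and since $I_{\psi_{n,l,i},\frac{1}{2}}(P_j)$ converges in distribution to a finite Gaussian by the consistent-approximation part of Theorem~\ref{thm:joinopconv} (and Cauchy--Schwarz absorbs the $L^2$-negligible discrepancy between $\psi_{n,l,i}$ and the comparison step function in Definition~\ref{def:con}), the prefactor $n^{-1/2}$ already forces $R_n \convp 0$. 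Second, the matching on the continuous side reduces, after observing that the cross terms between $C_l(x,y)$ and $C_l(y,x)$ contribute only $O(h^2)$ per increment and hence vanish in quadratic covariation, to the combinatorial identity $\sum_{m}\frac{m+1}{l_1+1}\binom{l_1+1}{(l_1-m)/2}\cdot\frac{m+1}{l_2+1}\binom{l_2+1}{(l_2-m)/2} = \beta_{l_1+l_2}-\beta_{l_1}\beta_{l_2}$ (sum over $m\ge 1$ of the common parity); this is the ballot-problem/Catalan identity you gesture at, and it should be stated explicitly rather than left implicit, but it is true and standard.
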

We now state a Theorem for the spectral measure at any $L^{2}[0,1]$ measurable function.
\begin{theorem}\label{Thm:spect}
    The operator $\mathcal{W}$ is bounded and self-adjoint with probability $1$.
    Let $f \in L^{2}[0,1]$ be any fixed function and $\mu_{f}$ be the spectral measure of $\mathcal{W}$ at $f$. Then $\mu_{f}$ is almost surely the semicircular law.
\end{theorem}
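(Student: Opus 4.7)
The plan is to compute the moments $\langle f, \mathcal{W}^{k} f\rangle$ explicitly and identify them with those of the semicircle law. Since the semicircle distribution is compactly supported, it is uniquely determined by its moments $\{\beta_{k}\}$, so matching moments suffices. I carry this out in three stages: (i) show $\mathcal{W}$ is bounded and self-adjoint so that the spectral measure $\mu_{f}$ is well defined via the spectral theorem; (ii) compute $\langle \mathbb{I}_{P_{i}}, \mathcal{W}^{k}\mathbb{I}_{P_{i}}\rangle$ exactly using the recursion in Definition \ref{def:limop}; (iii) extend to general $f \in L^{2}[0,1]$ by density and continuity.

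For (i), the classical F\"uredi--Koml\'os bound gives $\|K_{n}\|_{\op} \le 2+o(1)$ almost surely. Given two basis vectors $v_{1}, v_{2} \in \mathcal{H}$ of the form $\mathbb{I}_{P_{i}}$ or $\psi_{k,i}$, pick approximating step functions $v_{n,1}, v_{n,2}$ as in Definition \ref{def:opconv}; Cauchy--Schwarz on the finite-$n$ side combined with the convergence $\langle v_{n,1}, K_{n}v_{n,2}\rangle \to \langle v_{1}, \mathcal{W} v_{2}\rangle$ then yields $|\langle v_{1}, \mathcal{W} v_{2}\rangle| \le 2\|v_{1}\|\|v_{2}\|$ almost surely. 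Taking a countable union over the (countable) family of basis vectors and extending by continuity to $\mathcal{H}$ gives $\|\mathcal{W}\|_{\op} \le 2$ almost surely. Self-adjointness is inherited similarly by passing the symmetry $\langle v_{n,1}, K_{n} v_{n,2}\rangle = \langle K_{n}v_{n,1}, v_{n,2}\rangle$ to the limit.

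For (ii), I prove by induction on $k\ge 1$ that $\mathcal{W}^{k}\mathbb{I}_{P_{i}} = \beta_{k}\mathbb{I}_{P_{i}} + \psi_{k,i}$. The base case $k=1$ is the first line of Definition \ref{def:limop} combined with $\beta_{1}=0$. For the inductive step,
\[
\mathcal{W}^{k+1}\mathbb{I}_{P_{i}} = \beta_{k}\mathcal{W}(\mathbb{I}_{P_{i}}) + \mathcal{W}(\psi_{k,i}) = \beta_{k}\psi_{1,i} + \beta_{k+1}\mathbb{I}_{P_{i}} + \psi_{k+1,i} - \beta_{k}\psi_{1,i} = \beta_{k+1}\mathbb{I}_{P_{i}} + \psi_{k+1,i},
\]
where the cancellation of $\beta_{k}\psi_{1,i}$ is precisely the purpose of the $-\beta_{k}\psi_{1,i}$ term engineered into Definition \ref{def:limop}. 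Pairing with $\mathbb{I}_{P_{i}}$ yields $\langle \mathbb{I}_{P_{i}}, \mathcal{W}^{k}\mathbb{I}_{P_{i}}\rangle = \beta_{k}|P_{i}| + \langle \mathbb{I}_{P_{i}}, \psi_{k,i}\rangle$. Since $\psi_{k,i}$ is a linear combination of $\frac{dB(x)}{\sqrt{dx}}$-type objects derived from the Brownian sheets underlying $\Gamma_{k}$, the orthogonality lemma of Section \ref{subsec:delta} (applied after verifying that the relevant Gaussian process has linear quadratic variation in $x$) gives $\langle \mathbb{I}_{P_{i}}, \psi_{k,i}\rangle = 0$ almost surely. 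A countable union over $(k,i)\in\mathbb{N}\times\mathbb{N}$ preserves probability one.

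For (iii), let $f = \sum_{j=1}^{m} c_{j}\mathbb{I}_{P_{i_{j}}}$ be a step function with rational breakpoints. By linearity $\mathcal{W}^{k}f = \beta_{k}f + \sum_{j}c_{j}\psi_{k,i_{j}}$, and each cross term $\langle \mathbb{I}_{P_{i_{l}}}, \psi_{k,i_{j}}\rangle$ vanishes almost surely by the same lemma, giving $\langle f, \mathcal{W}^{k}f\rangle = \beta_{k}\|f\|^{2}$. For general $f \in L^{2}[0,1]$, approximate $f_{m}\to f$ in $L^{2}$ by such step functions; since $L^{2}[0,1] \hookrightarrow \mathcal{H}$ isometrically by Proposition \ref{prop:inner}, and $\|\mathcal{W}\|_{\op} \le 2$ from step (i) gives $\|\mathcal{W}^{k}f_{m}-\mathcal{W}^{k}f\|\to 0$, one obtains $\langle f, \mathcal{W}^{k}f\rangle = \beta_{k}\|f\|^{2}$ for every $k$ almost surely. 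By the spectral theorem these are the moments of $\mu_{f}$, so $\mu_{f}/\|f\|^{2}$ coincides with the (moment-determined) semicircle distribution. The principal obstacle is step (i): under the non-standard convergence of Definition \ref{def:opconv} the bound $\|K_{n}\|_{\op}\le 2+o(1)$ is only available through inner products against countably many basis vectors, so transferring it to a uniform operator bound on the closure $\mathcal{H}$ requires careful density and continuity arguments, together with the verification that all the countably many a.s.\ events coalesce into a single common event on which every claim holds.
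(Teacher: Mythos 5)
Your proposal follows essentially the same route as the paper: boundedness is transferred from the almost-sure operator-norm bound $\|K_n\|_{\op}\le 2+o(1)$, self-adjointness is inherited from $K_n$, the identity $\mathcal{W}^k\mathbb{I}_{P_i}=\beta_k\mathbb{I}_{P_i}+\psi_{k,i}$ (which you prove by a clean telescoping induction that the paper asserts without proof) paired with the orthogonality lemma gives $\langle\mathbb{I}_{P_i},\mathcal{W}^k\mathbb{I}_{P_i}\rangle=\beta_k\|\mathbb{I}_{P_i}\|^2$ a.s., and density plus boundedness extends the moment identity to all $f\in L^2[0,1]$; since the semicircle law is moment-determined, the conclusion follows.

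One imprecision in step (i) deserves repair. The bound $|\langle v_1,\mathcal{W}v_2\rangle|\le 2\|v_1\|\|v_2\|$ established for pairs of \emph{basis vectors} $v_1,v_2\in\{\mathbb{I}_{P_i},\psi_{k,i}\}$ and then unioned over the countable index set does not by itself yield $\|\mathcal{W}\|_{\op}\le 2$: a bound on matrix elements of a bilinear form controls neither the quadratic form nor the operator norm (the off-diagonal entries could conspire). What is needed is to run the same finite-$n$ Cauchy--Schwarz argument on arbitrary \emph{finite rational linear combinations} $v_1,v_2$ of basis vectors --- still a countable family, so the almost-sure events still coalesce --- and only then extend to the closure $\mathcal{H}$ by continuity. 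Equivalently, as the paper does, one bounds $\|\mathcal{W}v_m\|$ directly term-by-term along a Cauchy sequence $v_m$ of finite linear combinations and shows $(\mathcal{W}v_m)_m$ is Cauchy. This is a routine fix, but without it the passage from the matrix-element estimate to the uniform operator bound does not go through.
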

\section{Proofs}
\subsection{Preliminaries}
The main ingredient of the proof of Theorem \ref{Thm:wigcon} and Theorem \ref{Thm:spect} is Theorem \ref{thm:joinopconv}. So, the main focus of this section will be to prove Theorem \ref{thm:joinopconv}. For this, we build some preliminaries. These definitions are taken from \cite{AGZ} and \cite{AZ05}.
\begin{definition}[$\mathcal{S}$ words]
Given a set $\mathcal{S}$, an $\mathcal{S}$ letter $s$ is simply an element of $\mathcal{S}$. An $\mathcal{S}$ word $w$ is a finite sequence of letters $s_1 \cdots s_k$, at least one letter long. An $\mathcal{S}$ word $w$ is \emph{closed} if its first and last letters are the same. In this paper, $\mathcal{S}=\{1,\ldots,n\}$.
\end{definition}
Two $\mathcal{S}$ words $w_1,w_2$ are called \emph{equivalent}, denoted $w_1\sim w_2$, if there is a bijection on $\mathcal{S}$ that maps one into the other. For any word $w = s_1 \cdots s_k$, we use $l(w) = k$ to denote its \emph{length}. We define the \emph{weight} $\wt(w)$ as the number of distinct elements of the set $\{s_1, \ldots, s_k \}$ and the \emph{support} of $w$, denoted by $\supp(w)$, as the set of letters appearing in $w$. With any word $w$, we may associate an undirected graph, with $\wt(w)$ vertices and at most $l(w) - 1$ edges, as follows.
\begin{definition}[Graph associated with a word]\label{def:graphword}
Given a word $w = s_1 \cdots s_k$, we let $G_w = (V_w, E_w)$ be the graph with set of vertices $V_w = \supp(w)$ and (undirected) edges $E_w = \{\{s_i, s_{i+1} \}, i = 1, \ldots, k - 1 \}.$
\end{definition}
The graph $G_w$ is connected since the word $w$ defines a path connecting all the
vertices of $G_w$, which further starts and terminates at the same vertex if the word
is \emph{closed}.  We note that equivalent words generate the same graphs $G_w$ (up to graph isomorphism) and the same passage-counts of the edges.
\begin{definition}[Sentences and corresponding graphs]\label{def:sentence}
A sentence $a = [w_i]_{i=1}^{m} = [[s_{i,j}]_{j = 1}^{l(w_i)}]_{i = 1}^{m}$ is an ordered collection of $m$ words of length $l(w_1), \ldots, l(w_m)$, respectively. We define $\supp(a) := \cup_{i = 1}^m \supp(w_i)$ and $\wt(a) := |\supp(a)|$. We set $G_a = (V_a, E_a)$ to be the graph with
\[
    V_a = \supp(a),\quad E_a = \left\{\{s_{i, j}, s_{i, j + 1}\} \mid j = 1, \ldots, l(w_i) - 1; \, i = 1,\ldots, m\right\}.
\]
\end{definition} 
Now, we write the expression for $K_{n}^{l}(\mathbb{I}_{P_{i}})$ for $P_{i}=(a_{i},b_{i}]$ for a given $l$.
\begin{equation}\label{eq:expressionfirst}
    \begin{split}
        K_{n}^{l}(\mathbb{I}_{P_{i}})\left(\frac{i}{n}\right)&= \left(\frac{\sqrt{n}}{n}\right)^{l} \sum_{j_{0},\ldots, j_{l} ~|~ j_{0}=i, j_{l} \in \{ [a_{i}n],\ldots, [b_{i},n] \}} x_{j_{0},j_{1}}\ldots x_{j_{l-1},j_{l}}.
    \end{split}
\end{equation}
Firstly, the index $i$ can be in $\{[an],\ldots, [bn]\}$ or not. If $i \in \{[an],\ldots, [bn]\}$, we write the r.h.s. of \eqref{eq:expressionfirst} in the following way:
\begin{equation}
\begin{split}
   & \left( \frac{1}{\sqrt{n}} \right)^{l}\sum_{j_{0},\ldots, j_{l} ~|~ j_{0}=i, j_{l} \in \{ [a_{i}n],\ldots, [b_{i},n] \}} x_{j_{0},j_{1}}\ldots x_{j_{l-1},j_{l}}\\
   &=   \underbrace{\left( \frac{1}{\sqrt{n}} \right)^{l}\sum_{j_{0},\ldots, j_{l} ~|~ j_{0}=i, j_{l}=i }   x_{j_{0},j_{1}}\ldots x_{j_{l-1},j_{l}}}_{V_{1}} +  \underbrace{\left( \frac{1}{\sqrt{n}} \right)^{l}\sum_{j_{0},\ldots, j_{l} ~|~ j_{0}=i, j_{l}\neq i }  x_{j_{0},j_{1}}\ldots x_{j_{l-1},j_{l}}}_{V_{2}} .
\end{split}
\end{equation}
On the other hand, if $i \notin \{[an] ,\ldots [bn] \}$, it is not possible to have $i_{0}=i_{l}$. The term $V_{1}$ actually has a non-vanishing mean when $l$ is even and a vanishing variance as $n \to \infty$. On the other hand, randomness comes from the term $V_{2}$ which has an asymptotically vanishing mean but a non-vanishing variance. We shall analyze these one by one. For proving the Gaussian convergence, we use method of moments and prove the joint moments satisfy the Wick's formula. 
Before going into the proof, we define some quantities which essentially capture the randomness. 
\begin{definition}\label{def:nonback}
    For any $l < n$, consider the corresponding random variables 
    \[
     \xi_{i,j,n}^{l} := \left( \frac{1}{\sqrt{n}} \right)^{l} \sum_{j_{0}=i,\ldots, j_{l}=j~|~ j_{0},\ldots, j_{l} ~\text{distinct}} x_{j_{0},j_{1}} \ldots x_{j_{l-1},j_{l}}
    \]
    whenever $i \neq j$ and $\xi_{i,i}^{l}=0$ whenever $i=j$. We shall call $\xi_{i,j,n}^{l}$ as the signed non-backtracking line of length $l$. Observe that $\xi_{i,j,n}^{l}=\xi_{j,i,n}^{l}$ for all $i \neq j$. Next, let us consider the following kernels on $L^{2}\{\frac{1}{n}, \ldots, \frac{n}{n} \}$ with respect to $\mu_{n}$ as defined in Section \ref{sec:wig}: 
    \[
     B_{n,l}\left(\frac{i}{n}, \frac{j}{n}\right):= n \xi_{i,j,n}^{l}.
    \]
\end{definition}
The entries $\xi_{i,j,n}^{l}$ will be of fundamental importance to prove Theorem \ref{Thm:wigcon}. We now state a lemma about the structures of the kernels $B_{n,l}$. 
\begin{lemma}\label{lem:nonback}
Let $B_{n,l}$'s be as defined in Definition \ref{def:nonback}. Then $B_{n,l}\left(\mathbb{I}_{P_{i}}\right)$'s are jointly consistent approximations of $\left( \frac{d(\Phi_{l}(x,b_{i})-\Phi_{l}(x,a_{i}))}{\sqrt{dx}} \right)_{i \in \mathbb{N},l\in \mathbb{N}}$. Here, $\Phi_l(x,y)= \frac{1}{\sqrt{2}}\left(B_{l}(x,y)+ B_{l}(y,x)\right)$ and $B_{l}(x,y)$ is a sequence of independent Browinan sheets.
\end{lemma}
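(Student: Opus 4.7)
The plan is to set $f_{n,l,i} = \tilde{f}_{n,l,i} = B_{n,l}(\mathbb{I}_{P_{i}})$, so that the $L^{2}$ requirement in Definition \ref{def:conII} holds trivially, and to reduce the lemma to establishing the joint distributional convergence of
\[
\bigl(I_{f_{n,l,i},1/2}((a,b])\bigr)_{l,i \in \mathbb{N},\, a,b \in \mathbb{Q}\cap[0,1]}
\]
to the Gaussian family $\Phi_{l}(b,b_{i})-\Phi_{l}(a,b_{i})-\Phi_{l}(b,a_{i})+\Phi_{l}(a,a_{i})$. Unfolding the definitions in Section \ref{sec:wig} and Definition \ref{def:nonback} gives
\[
I_{f_{n,l,i},1/2}((a,b]) = n^{-(l+1)/2}\!\!\sum_{\substack{j_{0}\in A_{r}\\ j_{l}\in B_{i}}}\,\sum_{\substack{j_{0},\ldots,j_{l}\\\text{distinct}}} x_{j_{0}j_{1}}\cdots x_{j_{l-1}j_{l}},
\]
where I write $A_{r}:=(a n,b n]\cap\mathbb{Z}$ and $B_{i}:=(a_{i}n,b_{i}n]\cap\mathbb{Z}$. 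So each statistic is a homogeneous polynomial chaos of order $l$ in the Wigner entries, summed over self-avoiding walks whose two endpoints lie in prescribed index boxes.

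I would prove the claim by the method of moments, using the word/sentence formalism recalled in Definitions \ref{def:graphword}--\ref{def:sentence}. Fixing $R\in\mathbb{N}$ and data $(l_{r},i_{r},a_{r},b_{r})_{r=1}^{R}$, the joint moment
\[
\mathbb{E}\Bigl[\prod_{r=1}^{R}I_{f_{n,l_{r},i_{r}},1/2}((a_{r},b_{r}])\Bigr]
\]
expands as a sum over sentences $[w_{r}]_{r=1}^{R}$ in which each $w_{r}$ is a self-avoiding word of length $l_{r}+1$ with first letter in $A_{r}$ and last letter in $B_{i_{r}}$, weighted by $\mathbb{E}[\prod x_{\cdot,\cdot}]$ and scaled by $n^{-\sum_{r}(l_{r}+1)/2}$. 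The standard argument \emph{à la} Wigner (cf.\ \cite{AGZ,AZ05}) combined with the moment hypothesis on the $x_{i,j}$ shows that only sentences whose graph $G_{a}$ has every edge traversed exactly twice contribute to leading order; support-counting $\wt(a)\le |E_{a}|+(\text{components of }G_{a})$ absorbs all others into $o(1)$.

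The surviving sentences can be enumerated by observing that, in any contributing sentence, each word $w_{r}$ must be edge-matched against exactly one partner $w_{r'}$ of the \emph{same} length $l_{r}=l_{r'}$, and within any such pair the two self-avoiding walks must either coincide or be reverses of each other — partial reversals are excluded precisely because both walks are self-avoiding. Thus odd $R$ contributes nothing in the limit, while for even $R$ the leading-order sum factorises over pair-partitions $\pi$ of $\{1,\ldots,R\}$. A direct count of each pair $(r,r')$ with $l_{r}=l_{r'}$, summing the identical and reversed configurations, yields
\[
\lambda(A_{r}\cap A_{r'})\,\lambda(B_{i_{r}}\cap B_{i_{r'}})\;+\;\lambda(A_{r}\cap B_{i_{r'}})\,\lambda(A_{r'}\cap B_{i_{r}}),
\]
which is precisely $\mathrm{Cov}(\Phi_{l_{r}}(\text{rect}_{r}),\Phi_{l_{r'}}(\text{rect}_{r'}))$ for the symmetrization $\Phi_{l}=(B_{l}+B_{l}^{T})/\sqrt{2}$; while pairs with $l_{r}\ne l_{r'}$ cannot match edge counts, recovering the independence of the limiting $\Phi_{l}$'s across $l$. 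Assembling the pair-partitions produces Wick's formula, hence joint Gaussian convergence with the claimed covariance, and identifying the limit as $d(\Phi_{l}(x,b_{i})-\Phi_{l}(x,a_{i}))/\sqrt{dx}$ via Definition \ref{def:dersetalpha} closes the proof.

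The main obstacle I expect is the combinatorial bookkeeping on self-avoiding words with prescribed distinct endpoint boxes: the support-counting argument has to keep track of how the two shifted rectangles $A_{r}$ and $B_{i_{r}}$ interact with the tree structure of the paired walks, and in particular produce the correct \emph{sum} of two Lebesgue products (identical vs reversed matchings) rather than just one. Ruling out exotic higher-order pairings of self-avoiding walks — in which some middle segment is traversed in one direction by one walk and in the opposite direction by the other — is the subtlest combinatorial point, and I would settle it by using the tree rigidity enforced by the self-avoidance constraint in Definition \ref{def:nonback}.
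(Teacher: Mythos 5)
Your proof is correct and follows the same backbone as the paper's: unfold the non-backtracking kernel sums into polynomial chaoses over self-avoiding walks, apply the method of moments via the word/sentence formalism of Definitions~\ref{def:graphword}--\ref{def:sentence}, show that only sentences with every edge traversed exactly twice survive, and reduce to pair partitions to invoke Wick's theorem. Where you diverge is in the covariance identification. The paper, after reducing to paired words, computes the number of admissible endpoint configurations and identifies the limit by comparing against auxiliary symmetrized Gaussian kernels $T_{n,l}\left(\frac{i}{n},\frac{j}{n}\right)=\frac{\sqrt{n}}{\sqrt{2}}\left(\varphi_{i,j,l}+\varphi_{j,i,l}\right)$, reading the limiting covariance off the corresponding Brownian-sheet statement \eqref{eq:Gaucon}. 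You instead compute it directly: since each word in the $B_{n,l}(\mathbb{I}_{P_i})$ expansion is self-avoiding, two paired words must have identical edge sets, and a path graph admits exactly two self-avoiding traversals (forward and reversed), yielding the covariance $\lambda(A_{r}\cap A_{r'})\lambda(B_{i_r}\cap B_{i_{r'}})+\lambda(A_{r}\cap B_{i_{r'}})\lambda(A_{r'}\cap B_{i_r})$, which matches $\Cov(\Phi_l(\mathrm{rect}_r),\Phi_l(\mathrm{rect}_{r'}))$ for $\Phi_l=\frac{1}{\sqrt{2}}(B_l+B_l^{T})$. This is a valid and in fact more transparent way to close the argument: the ``identical vs.\ reversed'' dichotomy you use to exclude partial reversals is precisely the specialization of the paper's Lemma~\ref{lem:clt}(d) (the ``stem'' must be the entire walk) to self-avoiding words, and your two-term count makes the source of the symmetrization in $\Phi_l$ combinatorially visible. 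The paper's auxiliary-kernel route is more compact but hides this bookkeeping inside a Gaussian comparison that it does not fully spell out.
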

We shall give a proof of this result later on. However, we shall essentially show that 
\[
\left(K_{n}^{l}(\mathbb{I}_{P_{i}})- \beta_{l}\mathbb{I}_{P_{i}}\right)_{i\in \mathbb{N}, l \in \mathbb{N}}
\]
has the same asymptotic distribution to 
\[
\left(\sum_{m=1~|~ l-m ~ \text{even}}^{l} \frac{m+1}{l+1}\binom{l+1}{\frac{l-m}{2}} B_{n,l}\left(\mathbb{I}_{P_{i}}\right)\right)_{l\in \mathbb{N}, m \in \mathbb{N}}
\]
as $n$ goes to infinity.
We shall now give a proof of Lemma \ref{lem:nonback}. For this, we require the statement of Wick's theorem and a combinatorial lemma. 
\begin{theorem}(Wick's Theorem)\label{Thm:wick}
    Suppose, we have a sequence of $l$ dimensional random vector $\left(Y_{n,1},\ldots, Y_{n,l}\right)$ and there exists an $l \times l$ positive semi-definite matrix $\Sigma$ such that for any $m$, 
    \[
      \mathrm{E}\left[ \prod_{i=1}^{m} X_{n,i} \right] \to \left\{
         \begin{array}{cc}
             0 & \text{whenever $m$ is odd} \\
             \sum_{\eta \in \mathcal{P}_{m}} \prod_{j=1}^{\frac{m}{2}} \Sigma\left(\eta(j,1),\eta(j,2)\right) & \text{whenever $m$ is even} 
         \end{array}
      \right.
    \]
    for any $X_{n,1},\ldots, X_{n,m} \in \{ Y_{n,1},\ldots, Y_{n,l} \}$. Here, for even $m$, $\mathcal{P}_{m}$ denotes all possible partitions of $\{ 1,\ldots,m \}$ such that each block has two elements and for a partition $\eta \in \mathcal{P}_{m}$, $\eta(j,1),\eta(j,2)$ denote the first and second entry of the $j$ th block of $\eta$. Then $(Y_{n,1},\ldots, Y_{n,l}) \stackrel{d}{\to} N_{l}\left( 0, \Sigma \right)$. We make a remark that there can be repetitions among the random variables $X_{n,1},\ldots, X_{n,m}$. In particular, taking $X_{n,1}=\ldots= X_{n,m}=Y_{n,1}$ will give the distributional convergence of $Y_{n,1}$ to $N(0, \Sigma(1,1))$. 
\end{theorem}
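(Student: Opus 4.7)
The plan is to prove this by the classical method of moments, combined with the Cram\'er--Wold device to reduce the multivariate problem to a univariate one. The target distribution $N_{l}(0,\Sigma)$ is a multivariate Gaussian and is uniquely determined by its moments, so once I show convergence of all joint moments and tightness, convergence in distribution follows.

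First, I would verify that the Wick/Isserlis formula displayed in the hypothesis is exactly the formula for the joint moments of a centered multivariate Gaussian. Starting from the moment generating function $\mathrm{E}\bigl[\exp(t^{T} Y)\bigr]=\exp(\tfrac{1}{2}t^{T}\Sigma t)$ for $Y\sim N_{l}(0,\Sigma)$, expanding the exponential in a Taylor series and extracting the coefficient of $t_{j_{1}}\cdots t_{j_{m}}$ gives, after a standard combinatorial matching, exactly $\sum_{\eta\in\mathcal{P}_{m}}\prod_{j=1}^{m/2}\Sigma(\eta(j,1),\eta(j,2))$ for even $m$ and zero for odd $m$. Next, I would establish tightness of $(Y_{n,1},\ldots,Y_{n,l})$. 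Setting $X_{n,1}=X_{n,2}=Y_{n,j}$ in the hypothesis gives $\mathrm{E}[Y_{n,j}^{2}]\to\Sigma(j,j)$, so the marginal second moments are uniformly bounded. Chebyshev then implies marginal tightness, and marginal tightness implies joint tightness by Prokhorov.

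To reduce to the univariate case, I would fix an arbitrary vector $c=(c_{1},\ldots,c_{l})\in\mathbb{R}^{l}$ and set $Z_{n}:=\sum_{j=1}^{l}c_{j}Y_{n,j}$. By multilinearity,
\[
\mathrm{E}[Z_{n}^{m}]=\sum_{j_{1},\ldots,j_{m}=1}^{l} c_{j_{1}}\cdots c_{j_{m}}\,\mathrm{E}\!\left[\prod_{i=1}^{m} Y_{n,j_{i}}\right].
\]
Applying the hypothesis to the choice $X_{n,i}=Y_{n,j_{i}}$, this sum converges as $n\to\infty$ to $0$ for odd $m$ and to
\[
\sum_{j_{1},\ldots,j_{m}} c_{j_{1}}\cdots c_{j_{m}} \sum_{\eta\in\mathcal{P}_{m}} \prod_{k=1}^{m/2} \Sigma(\eta(k,1),\eta(k,2))
=(m-1)!!\,\bigl(c^{T}\Sigma c\bigr)^{m/2}
\]
for even $m$, where the last equality follows by interchanging the two sums and recognizing that, for each pair partition $\eta$, the summation over indices factors into $m/2$ copies of $\sum_{j,k}c_{j}c_{k}\Sigma(j,k)=c^{T}\Sigma c$. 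These are precisely the moments of $N(0,c^{T}\Sigma c)$.

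Finally, I would invoke the univariate method of moments. Because Gaussian moments grow like $(m-1)!!\,\sigma^{m}$, the Carleman condition $\sum_{m\ge 1}\beta_{2m}^{-1/(2m)}=\infty$ is satisfied, hence $N(0,c^{T}\Sigma c)$ is determined by its moments. Combined with the tightness established above (which gives tightness of $Z_{n}$), any subsequential weak limit of $Z_{n}$ must be $N(0,c^{T}\Sigma c)$, so $Z_{n}\convd N(0,c^{T}\Sigma c)$. Since $c$ was arbitrary, the Cram\'er--Wold theorem yields $(Y_{n,1},\ldots,Y_{n,l})\convd N_{l}(0,\Sigma)$. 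The only genuinely nontrivial step is the combinatorial identity relating the hypothesized Wick sum to $(m-1)!!(c^{T}\Sigma c)^{m/2}$; everything else is standard. Both pieces are well known and could also be replaced by a direct citation to, e.g., Billingsley or Janson, but writing out the computation above makes the argument self-contained.
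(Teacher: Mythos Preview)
The paper does not supply a proof of this statement: Wick's Theorem is stated as a known tool and then invoked in the proof of Lemma~\ref{lem:nonback}. Your argument is the standard one (Cram\'er--Wold reduction plus the univariate method of moments, with the Carleman condition ensuring the Gaussian is moment-determined) and is correct; the separate tightness step is slightly redundant since convergence of all even moments already forces tightness, but this does no harm.
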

Next, we state a Lemma which is important for proving the asymptotic normality.
\begin{lemma}\label{lem:clt}
    Suppose, we have a sentence $a$ consisting of $m$ words $w_{1},\ldots,w_{m}$ such that none of the words $w_{i}$ are closed, all the edges in the graph $G_{a}$ are traversed at least twice and for all $i$, there is $j\neq i$ such that $w_{i}$ and $w_{j}$ shares an edge. Then $ \mathrm{wt}(a) \le \sum_{i=1}^{k} \frac{l(w_{i})}{2} $ and the equality occurs if only if all the following conditions are satisfied: 
    \begin{enumerate}[(a)]
        \item All the edges in the graph $G_{a}$ are traversed exactly twice.
        \item $G_{a}$ is a collection of $\frac{m}{2}$ disjoint forests. In particular, $m$ has to be even.
        \item For every $i$, there is a unique $j$ such that $w_{i}$ and $w_{j}$ shares an edge.
        \item If $w_{i}$ and $w_{j}$ share an edge, then there will be a unique path of some length in $G_{w_{i}}$ which is common with $G_{w_{j}}$. These edges will be traversed exactly once. All other edges will be traversed exactly twice.
    \end{enumerate}
\end{lemma}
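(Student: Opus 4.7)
The plan is to bound $\wt(a)$ by combining an edge-count inequality with a component-count inequality, and then extract the equality conditions by tracking when each inequality is saturated. Counting total edge traversals across the sentence, word $w_{i}$ contributes $l(w_{i})-1$ traversals, so the overall total is $\sum_{i=1}^{m}(l(w_{i})-1)$. Since every edge of $E_{a}$ is traversed at least twice,
\[
|E_{a}| \le \tfrac{1}{2}\sum_{i=1}^{m}(l(w_{i})-1).
\]

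Let $c$ denote the number of connected components of $G_{a}$. For any graph one has $|V|\le |E|+c$ with equality if and only if the graph is a forest, so $\wt(a)\le |E_{a}|+c$. By hypothesis every word $w_{i}$ shares an edge with some $w_{j}$ ($j\neq i$), and two words sharing an edge lie in the same component of $G_{a}$; hence no component contains only one word, which forces $c\le m/2$. Combining,
\[
\wt(a)\le |E_{a}|+c\le \tfrac{1}{2}\sum_{i=1}^{m}(l(w_{i})-1)+\tfrac{m}{2}=\sum_{i=1}^{m}\frac{l(w_{i})}{2},
\]
giving the claimed inequality.

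For the equality case all three estimates must saturate, forcing (a) each edge is traversed exactly twice, $G_{a}$ is a forest, and each component of $G_{a}$ contains exactly two words; this gives (b) and (c). To establish (d), fix a paired component $T$ with words $w_{i}$ and $w_{j}$. Since neither word is closed, $w_{i}$ is a walk in the tree $T$ from some $u_{i}$ to some $v_{i}\neq u_{i}$, and similarly $w_{j}$ runs from $u_{j}$ to $v_{j}$. I use the standard parity fact that a walk in a tree from $u$ to $v$ traverses each edge of the unique $uv$-path an odd number of times and every other edge of the tree an even number of times (proved by tracking which of the two subtrees obtained by deleting the edge the walk currently sits in). If an edge of the $u_{i}v_{i}$-path were absent from $E_{w_{j}}$, then $w_{i}$ would traverse it an odd number of times while $w_{j}$ traverses it zero times, which cannot sum to the required total of $2$. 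Hence the $u_{i}v_{i}$-path lies entirely in $E_{w_{j}}$, and by symmetry the shared edges coincide with both the $u_{i}v_{i}$-path in $T$ and the $u_{j}v_{j}$-path; each shared edge is traversed exactly once by $w_{i}$ and once by $w_{j}$, while each remaining edge of $E_{w_{i}}$ must be traversed twice by $w_{i}$ alone as a side trip off this path. This is exactly (d).

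The main obstacle I anticipate is condition (d): one must run the tree-walk parity argument carefully and verify that the shared edges form a single simple path in $G_{w_{i}}$ (not, for instance, a disjoint union of paths), which is what justifies the side-trip picture for the non-shared portion of each word. The counting and component steps leading to the main inequality are otherwise routine \FK-style bookkeeping.
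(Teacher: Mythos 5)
Your proof is correct, and the core counting argument is essentially the paper's. You bound $|E_a|$ by the half-traversal count, use $\wt(a)\le |E_a|+c$ (with equality iff $G_a$ is a forest), and bound the component count $c\le m/2$ because each word shares an edge with another word and so no component is a singleton; the paper runs the same three estimates, only organized per connected component rather than globally, and the equality analysis extracting (a)--(c) is identical.

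Where you differ is in part (d). The paper cites Lemma~4.4 of Anderson--Zeitouni \cite{AZ05}, which says that for a word on a tree only edges of the geodesic joining the word's endpoints can be traversed an odd number of times, and then deduces the stem/side-trip structure. You instead prove the underlying parity fact directly: deleting any edge $e$ of the tree $T$ separates it into two subtrees, and a walk from $u$ to $v$ must cross $e$ an odd or even number of times according to whether $u$ and $v$ lie on opposite or the same side of $e$. From this you correctly conclude that any edge on the $u_iv_i$-path must also lie on the $u_jv_j$-path (otherwise the total traversal count would be odd, not $2$), that the two paths therefore coincide and form the shared stem, each stem edge being traversed exactly once by each word, while a non-stem edge shared by both words would be traversed an even number $\ge 2$ of times by each and hence at least four in total --- a contradiction --- so every non-stem edge of $E_{w_i}$ is used only by $w_i$ and traversed exactly twice. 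This self-contained derivation of (d) is a genuine (if modest) improvement: it replaces the external citation with an elementary argument and makes the parity mechanism explicit, whereas the rest of your proof and the paper's proceed along the same route.

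One minor typographical point: the lemma statement's upper bound is written as $\sum_{i=1}^{k} l(w_i)/2$, but the sum should clearly run to $m$, as you correctly use throughout.
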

\begin{proof}
    We know that for every $i$, there is $j\neq i$ such that $w_{i}$ and $w_{j}$ share an edge. Hence, the graph $G_{a}$ can be divided into several connected components. Now, the number of connected components will be less than equal to $\frac{m}{2}$ and the equality happens if and only if every connected component has two words. At first, we consider the case where one connected component has three or more words. We call this connected component $\mathcal{C}$. Let $E$ be the edge set of this connected component. Since, every edge is traversed at least twice, we have $|E|\le \sum_{i \in \mathcal{C}} \frac{l(w_{i})-1}{2}$. This follows from the following reason: Consider any word $w=(i_{0},\ldots, i_{k})$, then the corresponding random variable $x_{i_{0},i_{1}}\ldots x_{i_{k-1},i_{k}}$ has $k$ terms, while $l(w)=k+1$. Now, the graph corresponding to $\mathcal{C}$ has vertices $|V|\le |E|+1$. Hence $|V|\le \sum_{i \in \mathcal{C}} \frac{l(w_{i})-1}{2} + 1 \le \sum_{i \in \mathcal{C}} \frac{l(w_{i})}{2}$. The equality happens if and only if the graph $(V,E)$ of $\mathcal{C}$ is a tree, every edge is traversed exactly twice and the number of words in $\mathcal{C}$ is exactly two. This argument proves $(a)-(c)$ of the lemma. For part $(d)$, we refer to Lemma 4.4 in \cite{AZ05}. According to this lemma, only edges appearing in the geodesic with initial and terminal vertices coinciding with those of $w_{i}$ can be traversed odd number of times. As all the edges have been traversed at most twice, these edges are traversed exactly once. Since each edge in the sentence $[w_{i},w_{j}]$ is traversed exactly twice, $G_{w_{i}}$ overlaps with $G_{w_{j}}$ only on this geodesic. This completes the proof. Actually, one can show that when at least one of the four conditions ($(a)-(d)$) is violated, $\mathrm{wt}(a)\le \sum_{i=1}^{m} \frac{l(w_{i})}{2}- \frac{1}{2}$.
\end{proof}
\begin{proof}[Proof of Lemma \ref{lem:nonback}]
    We shall only consider two distinct values $l_{1}\neq l_{2}$ and two intervals $P_{1}=(a_{1},b_{1}],P_{2}=(a_{2},b_{2}]$. We shall prove that $\left(B_{n,l_{j}}(\mathbb{I}_{P_{j}})\right)_{j=1,2}$'s are jointly consistent approximations of $\left( \frac{d(\Phi_{l_{j}}(x,b_{j})- \Phi_{l_{j}}(x,a_{j}))}{\sqrt{dx}} \right)_{j\in \{ 1,2\}}$. 
    Let $Q_{i}$ be an enumeration of half-open intervals. Then we need to prove 
    \begin{equation}
        \left( X_{Q_{i},l_{1},P_{1},n},X_{Q_{i},l_{2},P_{2},n} \right) \stackrel{d}{\to}\left(  \Phi_{l_{1},P_{1}}(Q_{i}), \Phi_{l_{2},P_{2}}(Q_{i})\right).
    \end{equation}
    Here, for any $Q_{i}=(c_{i},d_{i}]$, $\Phi_{l_{j},P_j}(Q_{i})= \Phi_{l_{j}}(d_{i},b_{j})- \Phi_{l_{j}}(c_{i},b_{j})- \Phi_{l_{j}}(d_{i},a_{j})+ \Phi_{l_{j}}(c_{i},a_{j})$ for $j \in \{ 1,2 \}$ and 
    \begin{equation*}
       \begin{split}
        X_{Q_{i},l_{j},P_{j},n} &= \left( \frac{1}{\sqrt{n}} \right)^{l_{j}+1} \sum_{i_{0} \in \{[c_{i}n],\ldots, [d_{i}n]\},i_{1}\in [n],\ldots, i_{l_{j}} \in \{ [a_{j}n],\ldots, [b_{j}n] \}~|~ i_{0},\ldots,i_{l_{j}} ~ \text{are distinct}} x_{i_{0},i_{1}}\ldots x_{i_{l_{j-1}},i_{l_{j}}}\\
        &= \frac{1}{\sqrt{n}} \sum_{ i_{0} \in \{ [c_{i}n],\ldots, [d_{i},n] \}} B_{n,l_{j}}(\mathbb{I}_{P_{j}})\left(  \frac{i_{0}}{n}\right).
        \end{split}
    \end{equation*}
The case for multiple values of $l_{j}$'s is essentially same. This proof is broken into two parts: Firstly, we prove the joint asymptotic normality. Secondly, we prove that covariance matrix is same as covariance of the limiting quantity.\\
\textbf{Proof of joint asymptotic normality:} This proof follows from applications of Theorem \ref{Thm:wick} and Lemma \ref{lem:clt}. We choose finitely many intervals $(c_{1},d_{1}],\ldots (c_{k},d_{k}]$ and call them $Q_{1},\ldots, Q_{k}$. We prove 
$\left(X_{Q_{i},l_{1},P_{1},n},X_{Q_{i},l_{2},P_{2},n}\right)_{1 \le i \le k}$ jointly converges in distribution to a $2k$ dimensional Gaussian random variable with some variance covariance matrix for every $k$. Observe that $\mathrm{E}[X_{Q_{i},l_{j},P_{j},n}]=0$ for every $i$ and $j$. So, we consider the monomial 
$\Psi_{n,1}\ldots \Psi_{n,m}$ for any fixed $m$ where $\Psi_{n,s} \in \{ X_{Q_{i},l_{1},P_{1},n},X_{Q_{i},l_{2},P_{2},n} ~|~ 1 \le i \le k\} $ for every $s$. We shall prove Theorem \ref{Thm:wick} for $\mathrm{E}\left[ \Psi_{n,1}\ldots \Psi_{n,m} \right]$. Let $\Gamma_{s}$ denote the collection of words corresponding to the random variable $\Psi_{n,s}$. Observe that these words have a common length. We denote this by $l(\Gamma_{s})$. Hence, 
\[
\Psi_{n,s}= \left( \frac{1}{\sqrt{n}} \right)^{l(\Gamma_{s})} \sum_{w=(i_{0},\ldots,i_{l(\Gamma_{s})-1}) \in \Gamma_{s} } x_{i_{0},i_{1}}\ldots x_{i_{l(\Gamma_{s})-2},i_{l(\Gamma_{s})-2}}.
\]
For any word $w=(i_{0},\ldots,i_{l(w)-1})$, we denote 
$x_{w}:= x_{i_{0},i_{1}}\ldots x_{i_{l(w)-2},i_{l(w)-1}}$.
Hence,
\[
\Psi_{n,s}= \left( \frac{1}{\sqrt{n}} \right)^{l(\Gamma_{s})} \sum_{w \in \Gamma_{s}} x_{w}.
\]
So, 
\[
\mathrm{E}[\Psi_{n,1}\ldots \Psi_{n,m}]= \left( \frac{1}{\sqrt{n}} \right)^{\sum_{s=1}^{m}l(\Gamma_{s})} \sum_{w_{1},\ldots, w_{m}} \mathrm{E}\left[x_{w_{1}}\ldots x_{w_{m}}\right].
\]
For a generic choice of $w_{1},\ldots, w_{m}$, we consider the sentence $a=[w_{1},\ldots, w_{m}]$. Observe that $\mathrm{E}[x_{w_{1}}\ldots x_{w_{m}}] \neq 0$ only if all the edges in $G_{a}$ are traversed at least twice. As $l_{1},l_{2}$ and $m$ are fixed, for any choice of $(w_{1},\ldots,w_{m})$, there exists a uniform number $C_{l_{1},l_{2},m}$ such that 
\[
\mathrm{E}\left[ \left| x_{w_{1}}\ldots x_{w_{m}} \right| \right] \le C_{l_{1},l_{2},m}.
\]
Let $\mathcal{K}_{\ge 2}$ denote the class of all sentences $a=[w_{1},\ldots w_{m}]$ such that all the edges in $G_{a}$ are traversed at least twice and $\mathcal{K}_{2}$ where all the edges are traversed exactly twice. From Lemma \ref{lem:clt}, we get that any sentence $a \in \mathcal{K}_{\ge 2} \backslash \mathcal{K}_{2}$ will have $\mathrm{wt}(a)\le \sum_{s=1}^{m} l(\Gamma_{s}) -\frac{1}{2}$. On the other hand, the number of equivalence classes in of $a$ in $\mathcal{K}_{\ge 2}\backslash \mathcal{K}_{2}$ is again bounded by a universal constant $C'_{l_{1},l_{2},m}$. Hence,
\[
\left( \frac{1}{\sqrt{n}} \right)^{\sum_{s=1}^{m}l(\Gamma_{s})}\sum_{a \in \mathcal{K}_{\ge 2}\backslash \mathcal{K}_{2}} \mathrm{E}\left[\left| x_{w_{1}}\ldots x_{w_{m}} \right|\right] \le \left( \frac{1}{\sqrt{n}} \right) C_{l_{1},l_{2},m} C'_{l_{1},l_{2},m} \to 0.
\]
Hence, it is enough to work with $\mathcal{K}_{2}$ only. Now, by Lemma \ref{lem:clt}, we reduce our focus further to the class of sentences where conditions $(a)-(d)$ are satisfied. We call such sentences CLT sentences. This automatically gives rise to a partition $\mathcal{P}$ of the set $\{ 1,\ldots, m \}$ where each block contains exactly two elements. We put $s_{1}$ and $s_{2}$ in the same block if $w_{s_{1}}$ and $w_{s_{2}}$ have at least one edge common. In such cases $\mathrm{E}[x_{w_{s_{1}}}x_{w_{2}}]=1.$
Further, as the random variables corresponding to two different blocks are independent, we write 
\begin{equation}\label{eq:wicks}
    \begin{split}
        &\mathrm{E}\left[ \Psi_{n,1},\ldots, \Psi_{n,m} \right]\\
        &= \sum_{\eta \in \mathcal{P}_{m}} \left( \frac{1}{\sqrt{n}} \right)^{\sum_{s=1}^{m}l(\Gamma_{s})} \#\{a ~:~ \text{the partition corresponding to $a$ is $\eta$} \}.
    \end{split}
\end{equation}
We now consider a partition $\eta$ and all CLT sentences corresponding to the partition $\eta$.
It requires the words inside the same blocks are perfectly matched and the words in the different blocks have no edge in common. As $l_{1},l_{2}$ and $m$ are fixed, once we fix a word inside a block, all the vertices in the other blocks will have $n(1+o(1))$ many choices. Hence, the counts inside one block doesn't depend on the others up to a $(1+o(1))$ factor. On the other hand, the graph corresponding to each word is a straight line, hence a tree. So, the total power of $n$ is given by $n^{\frac{\sum_{s=1}^{m} l(\Gamma_{s})}{2}}$. Hence, we can reduce the count in \eqref{eq:wicks} further to 
\[
\sum_{\eta \in \mathcal{P}_{m}} (1+o(1)) \prod_{s=1}^{\frac{m}{2}} \frac{\#\{(w_{\eta(s,1)},w_{\eta(s,2)})~:~ w_{\eta(s,1)}= w_{\eta(s,2)}\}}{n^{l(\Gamma_{\eta(s,1)})}}.
\]
This almost proves the CLT provided we prove that $\lim_{n \to \infty} \frac{\#\{(w_{\eta(s,1)},w_{\eta(s,2)})~:~ w_{\eta(s,1)}= w_{\eta(s,2)}\}}{n^{l(\Gamma_{\eta(s,1)})}}$ exists as $n \to \infty$. This we shall prove in the next step.
\\
\textbf{Calculation of limiting variance:} 
First of all, observe that in $\frac{\#\{(w_{\eta(s,1)},w_{\eta(s,2)})~:~ w_{\eta(s,1)}= w_{\eta(s,2)}\}}{n^{l(\Gamma_{\eta(s,1)})}}$, $l(\Gamma_{\eta(s,1)})= l(\Gamma_{\eta(s,2)})$. Otherwise, the count is $0$. Let $w_{\eta(s,1)}= (i_{0},i_{1},\ldots, i_{l(\Gamma_{\eta(s,1)})-1})$ be a typical word. Observe that the choices for each $i_{j}$ for $1\le j \le l(\Gamma_{\eta(s,1)})-2$ are $(1+o(1))n$. Hence, the total number of choices for these vertices is $(1+o(1))n^{l(\Gamma_{\eta(s,1)})-2}$. So, it remains to prove that
\[
\lim_{n \to \infty} \frac{\text{no of choices for $i_{0}$ and $i_{l(\Gamma_{\eta(s,1)})-1}$}}{n^2}
\]
exists.

To calculate the variance, we use a trick. We consider two independent kernels $T_{n,l_1}$ and $T_{n,l_2}$ such that $T_{n,l_{s}}\left( \frac{i}{n}, \frac{j}{n} \right)= \frac{\sqrt{n}}{\sqrt{2}} \left(\varphi_{i,j,l_s} + \varphi_{j,i,l_s}\right)$ with $\varphi_{i,j,l_s} \sim_{i.i.d.} N(0,1)$ for $s \in \{ 1,2 \}$. We write 
$T_{n,l_{s}}^{(1)}(\frac{i}{n},\frac{j}{n})= \sqrt{n} \varphi_{i,j,l_{s}}$ and $T_{n,l_{s}}^{(2)}(\frac{i}{n},\frac{j}{n})= \sqrt{n} \varphi_{j,i,l_{s}}$. It can be easily shown that for intervals $P_{1},P_{2}$ and $Q_{1},\ldots, Q_{k}$ 
\begin{equation}
\begin{split}
&\left(\frac{1}{n} \sum_{i_{0}\in \{[nc_{q}],\ldots,[nd_{q}]\}, i_{1} \in \{[na_{j}],[nb_{j}]\}} \varphi_{i_{0},i_{1},l_{j}}, \frac{1}{n} \sum_{i_{0}\in \{[nc_{q}],\ldots,[nd_{q}]\}, i_{1} \in \{[na_{j}],[nb_{j}]\}} \varphi_{i_{1},i_{0},l_{j}}\right)_{1\le q \le k, 1 \le j \le 2}\stackrel{d}{\to}\\
&~~~~~~~~~~~~~\left( B_{l_{j}}(d_{q},b_{j})- B_{l_{j}}(c_{q},b_{j})- B_{l_{j}}(d_{q},a_{j}) + B_{l_{j}}(c_{q},a_{j}), \right.\\
& ~~~~~~~~~~~~~~~~~~~\left. B_{l_{j}}(b_{j},d_{q})- B_{l_{j}}(b_{j},c_{q})- B_{l_{j}}(a_{j},d_{q}) + B_{l_{j}}(a_{j},c_{q}) \right)_{1\le q \le k, 1 \le j \le 2}.
\end{split}
\end{equation}
Hence,
\begin{equation}\label{eq:Gaucon}
\begin{split}
&\left(\frac{1}{n\sqrt{2}} \sum_{i_{0}\in \{[nc_{q}],\ldots,[nd_{q}]\}, i_{1} \in \{[na_{j}],[nb_{j}]\}} \varphi_{i_{0},i_{1},l_{j}}+ \frac{1}{n\sqrt{2}} \sum_{i_{0}\in \{[nc_{q}],\ldots,[nd_{q}]\},i_{1} \in \{[na_{j}],[nb_{j}]\}} \varphi_{i_{1},i_{0},l_{j}}\right)_{1\le q \le k, 1 \le j \le 2}\stackrel{d}{\to}\\
&~~~~~~~~~~~~~\frac{1}{\sqrt{2}}\left( B_{l_{j}}(d_{q},b_{j})- B_{l_{j}}(c_{q},b_{j})- B_{l_{j}}(d_{q},a_{j}) + B_{l_{j}}(c_{q},a_{j})+ \right.\\
& ~~~~~~~~~~~~~~~~~~~\left. B_{l_{j}}(b_{j},d_{q})- B_{l_{j}}(b_{j},c_{q})- B_{l_{j}}(a_{j},d_{q}) + B_{l_{j}}(a_{j},c_{q}) \right)_{1\le q \le k, 1 \le j \le 2}.
\end{split}
\end{equation}
Also, the corresponding covariances converge. Now, 
\begin{equation}\label{eq:varlim}
    \begin{split}
       &\frac{\text{no of choices for $i_{0}$ and $i_{l(\Gamma_{\eta(s,1)})-1}$}}{n^2}\\
       &= \mathrm{Cov}\left( \left(\frac{1}{n\sqrt{2}} \sum_{i_{0}\in \{[nc_{1}],\ldots,[nd_{1}]\}, i_{1} \in \{[na_{1}],[nb_{1}]\}} \varphi_{i_{0},i_{1},l(\Gamma_{\eta(s,1)})}+ \frac{1}{n\sqrt{2}} \sum_{i_{0}\in \{[nc_{1}],\ldots,[nd_{1}]\},i_{1} \in \{[na_{1}],[nb_{1}]\}} \varphi_{i_{1},i_{0},l(\Gamma_{\eta(s,1)})},\right.\right.\\
       & ~~~~~~~~~~~~~~~~~~~~~~~~~~~~~~~~~~~~~~~~~~~~~~~\left. \left. \frac{1}{n\sqrt{2}} \sum_{i_{0}\in \{[nc_{2}],\ldots,[nd_{2}]\}, i_{1} \in \{[na_{2}],[nb_{2}]\}} \varphi_{i_{0},i_{1},l(\Gamma_{\eta(s,1)})}+ \frac{1}{n\sqrt{2}} \sum_{i_{0}\in \{[nc_{2}],\ldots,[nd_{2}]\},i_{1} \in \{[na_{2}],[nb_{2}]\}} \varphi_{i_{1},i_{0},l(\Gamma_{\eta(s,1)})} \right) \right)
    \end{split}
\end{equation}
where $(c_{1},d_{1}],(c_{2},d_{2}],(a_{1},b_{1}],(a_{2},b_{2}]$ are the intervals corresponding to $\eta(s,1)$ and $\eta(s,2)$. By using \eqref{eq:Gaucon}, we get that the limit in the L.H.S. of \eqref{eq:varlim} exists. This concludes the proof.
\end{proof}
\begin{proof}[Proof of Theorem \ref{thm:joinopconv}:]
Here, we consider a typical interval $P_{i}=(a,b]$. We write $K_{n}^{l}(\mathbb{I}_{P_{i}})$ in the following way:
\begin{equation}
    \begin{split}
        & K_{n}^{l}(\mathbb{I}_{P_{i}})\left(  \frac{i}{n}\right)\\
        &= \underbrace{\left( \frac{1}{\sqrt{n}} \right)^{l} \sum_{j_{0}=i, \ldots , j_{l}=i } x_{j_{0},j_{1}} \ldots x_{j_{l-1},j_{l}}}_{V_{1}(\frac{i}{n})\text{ say}} + \underbrace{\left( \frac{1}{\sqrt{n}} \right)^{l} 
        \sum_{j_{0}=i,\ldots, j_{l}\neq i} \left(x_{j_{0},j_{1}}\ldots x_{j_{l-1},j_{l}} -\mathrm{E}\left[ x_{j_{0},j_{1}}\ldots x_{j_{l-1},j_{l}}\right]\right)}_{V_{2}(\frac{i}{n}) \text{ say}}\\
        &~~~~~~ + \underbrace{\left( \frac{1}{\sqrt{n}} \right)^{l} 
        \sum_{j_{0}=i,\ldots, j_{l}\neq i} \mathrm{E}\left[ x_{j_{0},j_{1}}\ldots x_{j_{l-1},j_{l}} \right]}_{V_{3}(\frac{i}{n}) \text{ say}}.
    \end{split}
\end{equation}
We shall prove that $||V_{1}- \beta_{l} I_{P_{i}}||^{2} \stackrel{p}{\to} 0$, $||V_{3}||^{2} \to 0$ and 
\begin{equation}\label{eq:traceapproximationnb}
\left( \frac{1}{\sqrt{n}} \right)\sum_{i\in\{ [cn],\ldots,[dn\}]} V_{2}\left( \frac{i}{n}\right)- \sum_{m=1~|~ l-m ~ \text{even}}^{l} \frac{m+1}{l+1}\binom{l+1}{\frac{l-m}{2}}\underbrace{\left[\left( \frac{1}{\sqrt{n}} \right)\sum_{i \in  \{ [cn],\ldots, [dn]\}, j \in \{ [an],\ldots, [bn]\}  } \xi^{l}_{i,j,n}\right]}_{T_{2,m} (\text{ say})} \stackrel{p}{\to} 0.
\end{equation}

    \noindent 
    We prove this by comparing the variances and covariances.

    \noindent 
    First, we prove that $||V_{1}-\beta_{l}I_{P_{i}}||^{2} \stackrel{p}{\to} 0.$
    Firstly, when $i \notin \{ [an],\ldots,[bn] \}$, $V_{1}\left( \frac{i}{n} \right)=0$. So, we can consider the case $i \in \{ [an],\ldots, [bn] \}$. Here,
    $V_{1}\left( \frac{i}{n} \right)$ contains the closed words. Hence, it comes under the proof of \cite{AZ05}. If $l$ is even, then there are Dyck paths of length $l$ where all the edges will be repeated exactly twice. There are $\beta_{l}$ many Dyck paths and fixing a Dyck path and the initial point $i$, we shall have $n^{\frac{l}{2}}(1+o(1))$ many words corresponding to these Dyck paths. So, these paths will have mean $\beta_{l} \left( \frac{n}{n} \right)^{\frac{l}{2}}(1+o(1))=(1+o(1))\beta_{l}$. Now, following the proof of \cite{AZ05}, whenever $i \in \{ [an],\ldots,[bn] \}$, $\mathrm{E}\left[ V_{1}\left( \frac{i}{n} \right)-\beta_{l} \right]^{2}=O\left(\frac{1}{n}\right).$ As a consequence, $||V_{1}- \beta_{l}\mathbb{I}_{P_{i}}||^{2}= \frac{1}{n}\sum_{i=1}^{n}|V_{1}\left( \frac{i}{n} \right)- \beta_{l}\mathbb{I}_{i \in \{  [an],\ldots,[bn]\}}|^2 \stackrel{p}{\to} 0.$ 

    \noindent 
    Now we consider the term $V_{3}$. Recall that $V_{3}\left( \frac{i}{n} \right)= \left( \frac{1}{\sqrt{n}} \right)^{l} \sum_{j_{0}=i,\ldots, j_{l}\neq i}\mathrm{E}\left[ x_{j_{0},j_{1}}\ldots x_{j_{l-1},j_{l}} \right]$. Any term in the aforesaid sum has nonzero mean only if all the random variables in $x_{j_{0},j_{1}}\ldots x_{j_{l-1},j_{l}}$ are traversed more than twice. Let us fix a word $w$ in the aforesaid sum. We know that $|E_{w}| \le \frac{l}{2}$ as all the edges have been traversed at least twice. However, we have the constraint that $i_{0} \neq i_{l}$. If all the edges were traversed exactly twice, then the corresponding multi-graph would have been Eularian. Now, any exploration on an Eularian graph covering all the edges has to be closed. So, there is at least one edge in $E_{w}$ that is traversed at least thrice. So, $2(|E_{w}|-1)+3 \le l \Rightarrow |E_{w}|\le \frac{l-1}{2}.$ So, $|V_{w}|\le \frac{l-1}{2}+1$. However, we fixed the initial vertex $i$. So, total number of free choices of the vertices are bounded by $\frac{l-1}{2}$. Hence $|V_{3}\left( \frac{i}{n} \right)|\le \frac{n^{\frac{l-1}{2}}}{n^{\frac{l}{2}}}C=\frac{C}{\sqrt{n}}$. Here, $C$ is a universal constant. So,  $||V_{3}||^{2}= \frac{1}{n}\sum_{i=1}^{n} V_{3}\left( \frac{i}{n} \right)^2 \to 0.$ 

\noindent 
 We now calculate 
\[
\mathrm{Var}\left[\left( \frac{1}{\sqrt{n}} \right) \sum_{i \in \{[cn],\ldots,[dn]\} } V_{2}\left( \frac{i}{n} \right)\right].
\]
To make our notation simpler, for any word $w=(i_{0},\ldots,i_{l})$, we call $x_{w}= x_{i_{0},i_{1}}\ldots x_{i_{l-1},i_{l}}$. So 
\begin{equation}
    \begin{split}
       &\mathrm{Var}\left[ \left( \frac{1}{\sqrt{n}} \right) \sum_{i \in [cn],\ldots, [dn]  } V_{2}\left( \frac{i}{n} \right)\right]\\
       &= \left(\frac{1}{n}\right)^{l+1} \sum_{w_{1}, w_{2} ~|~ \text{ satisfying the constraints}} \mathrm{E}\left[ (x_{w_{1}}-\mathrm{E}(x_{w_{1}}))(x_{w_{2}}-\mathrm{E}(x_{w_{2}})) \right].
    \end{split}
\end{equation}
We know, $l(w_{1})=l(w_{2})=l+1$, $\left|\mathrm{E}\left[ (x_{w_{1}}-\mathrm{E}(x_{w_{1}}))(x_{w_{2}}-\mathrm{E}(x_{w_{2}})) \right]\right|\le C'$ for some universal $C'$ and in order to have $\mathrm{E}\left[ (x_{w_{1}}-\mathrm{E}(x_{w_{1}}))(x_{w_{2}}-\mathrm{E}(x_{w_{2}})) \right] \neq 0$, $G_{w_{1}}$ and $G_{w_{2}}$ needs to share an edge. Using Lemma \ref{lem:clt}, we get that $wt([w_{1},w_{2}])\le l+1$. We only consider the case when the equality happens. From Lemma \ref{lem:clt} $(a)-(d)$, we get that all the edges in the traversed exactly twice, $G_{a}$ is a tree and there is a unique path in $G_{w_{1}}$ which is common with $G_{w_{2}}$. We call this path to be the stem of $w_{1}$ and $w_{2}$. Let the length of the stem be $m$. As all the edges of $G_a$ is traversed exactly twice, the edges of the stem is traversed exactly once. Further, the set of starting point and the endpoint of the stem has to be same as the set of starting point and the endpoint of the words $w_{1}$ and $w_{2}$. Fixing the stem, $G_{w_{1}}$ is a collection of trees, corresponding to a Dyck path from every point of the stem. The total possible choices for the Dyck paths are given by the coefficient of $x^{\frac{l-m}{2}}$ in $C^{m+1}(x)$ where $C(x)$ is the generating function of the Catalan numbers. This number is given by $\frac{m+1}{l+1}\binom{l+1}{\frac{l-m}{2}}$ (See \cite{Lang} for a reference). All these words has $\mathrm{E}[x_{w}]=0$ and a straight forward calculation shows that
\begin{equation}
\begin{split}
    &\mathrm{Var}\left[ \left( \frac{1}{\sqrt{n}} \right)\sum_{i=1}^{n} V_{2}\left( \frac{i}{n} \right)\right]\\
    &= \sum_{m=1~|~ l-m \text{ even}} \left(\frac{m+1}{l+1}\binom{l+1}{\frac{l-m}{2}}\right)^2 \frac{\text{\# of stems of length $m$}}{n^{m+1}} (1+o(1)).
\end{split}    
\end{equation}
Further, 
\[
\text{\# of stems of length $m$} = (1+o(1)) \text{\# of paths $(i_{0},\ldots,i_{m})$ such that $i_{0},\ldots, i_{m}$ are distinct}.
\]
Hence,
\[
\mathrm{E}\left[\left( \frac{1}{\sqrt{n}} \right)\sum_{i\in\{ [cn],\ldots,[dn\}]} V_{2}\left( \frac{i}{n}\right)- \sum_{m=1~|~ l-m ~ \text{even}}^{l} \frac{m+1}{l+1}\binom{l+1}{\frac{l-m}{2}}\left[\left( \frac{1}{\sqrt{n}} \right)\sum_{i \in  \{ [cn],\ldots, [dn]\}, j \in \{ [an],\ldots, [bn]\}  } \xi^{l}_{i,j,n}\right]\right]^2 \to 0
\]
proving \eqref{eq:traceapproximationnb}.    
\end{proof}

\noindent 
The proof of Theorem \ref{Thm:wigcon} is a straightforward application of the arguments in the proof of Theorem \ref{thm:joinopconv} and Lemma \ref{lem:clt}. So we omit this.
\begin{proof}[Proof of Theorem \ref{Thm:spect}]
    First, we show that the operator $\mathcal{W}$ is bounded. Let $v_{m}$ be a Cauchy sequence of vectors in $\mathcal{H}$ such that $||v_{m}||=1$ and each $v_{m}$ can be written as a finite linear combination of $\mathbb{I}_{P_{i}}$ and $\psi_{k,i}$'s. For any fixed $m_{1},m_{2}$, we know that $||v_{n,m_{1}}- v_{n,m_{2}}|| \stackrel{p}{\to}||v_{m_{1}}-v_{m_{2}}||$ as $n \to \infty$. As the operator $\mathcal{W}$ takes finite linear combinations to finite linear combinations, we have $||K_{n}(V_{n,m_{1}})-K_{n}(V_{n,m_{2}})|| \stackrel{p}{\to} ||\mathcal{W}(v_{m_{1}})-\mathcal{W}(v_{m_{2}})||$. On the other hand, $\mathbb{P}\left[ ||K_{n}(V_{n,m_{1}})-K_{n}(V_{n,m_{2}})|| \le (2.0001) ||v_{n,m_{1}}-v_{n,m_{2}}|| \right]\to 1$. Hence, we get that $||\mathcal{W}(v_{m_{1}})-\mathcal{W}(v_{m_{2}})|| \le (2.0001)||v_{m_{1}}-v_{m_{2}}||$ with probability $1$. Hence, $\mathcal{W}(v_{m})$ is also Cauchy. Also, $\mathbb{P}\left[ ||K_{n}(v_{n,m})||\le 2.0001||v_{n,m}||  \right] \to 1$. Hence $||\mathcal{W}(v_{m})||\le 2.0001 ||v_{m}||$ with probability $1$. Hence, $\lim_{n\to \infty}||\mathcal{W}v_{m}||\le 2.0001 \lim_{m \to \infty} ||v_{m}||$ with probability $1$. So, $\mathcal{W}$ is bounded.

    \noindent 
    The proof of $\mathcal{W}$ is self-adjoint, follows from a similar argument and the fact that $K_{n}$ is self-adjoint.

    \noindent 
    Finally, observe that $\mathcal{W}^{l}(\mathbb{I}_{P_{i}})= \beta_{l}\mathbb{I}_{p_{i}}+ \psi_{l,i}.$ Hence $\langle \mathbb{I}_{P_{i}}, \mathcal{W}^{l} \mathbb{I}_{P_{i}}  \rangle = \beta_{l}\langle \mathbb{I}_{P_{i}}, \mathbb{I}_{P_{i}}\rangle + \langle \mathbb{I}_{P_{i}}, \psi_{l,i}\rangle= \beta_{l}\langle \mathbb{I}_{P_{i}}, \mathbb{I}_{P_{i}}\rangle$ with probability $1$. Since $\mathbb{I}_{P_{i}}$ is a basis of $L^{2}[0,1]$ and $\mathcal{W}$ is bounded, the spectral measure $\mu_{f}$ is almost surely the semicircular law for any $f \in L^2[0,1]$.
\end{proof}
\textbf{Acknowledgements:} I am grateful to Prof. Rajendra Bhatia for his interest and comments. I also thank Prof. Krishna Maddaly for discussions at the earlier stage of this work. 
\bibliographystyle{alpha}
\bibliography{PAR_SPI}

\end{document}